\newtheorem{thm}{Theorem}[section]
\newtheorem{lem}[thm]{Lemma}
\newtheorem{lemma}[thm]{Lemma}
\newtheorem{cor}[thm]{Corollary}
\newtheorem{prop}[thm]{Proposition}
\newtheorem{rem}[thm]{Remark}
\newcommand{\C}{\mathbb C}
\newcommand{\Z}{\mathbb{Z}}
\newcommand{\N}{\mathbb{N}}
\newcommand{\D}{\mathbb{D}}
\newcommand{\dd}{\,\mathrm{d} }
\newcommand{\abs}[1]{\left|#1\right|}
\newcommand{\set}[1]{\left\{#1\right\}}
\newcommand{\brkt}[1]{\left(#1\right)}
\newcommand{\jap}[1]{\big\langle #1\big\rangle}
\newcommand{\m}[1]{\begin{equation*}
\begin{split}
#1
\end{split}
\end{equation*}}
\newcommand{\nm}[2]{\begin{equation}\label{#1}
\begin{split}
#2
\end{split}
\end{equation}}
\begin{document}


\title{{Szeg\H{o} Recurrence for Multiple Orthogonal Polynomials on the Unit Circle}}
\date{\today}
\author{Marcus Vaktnäs$^{1,*}$}
\email{marcus.vaktnas@math.uu.se}
\author{Rostyslav Kozhan$^1$}
\email{rostyslav.kozhan@math.uu.se}
\address{$^{1}$Department of Mathematics, Uppsala University, S-751 06 Uppsala, Sweden}
\begin{abstract}
    We investigate polynomials that satisfy simultaneous orthogonality conditions with respect to several measures on the unit circle. We generalize the direct and inverse Szeg\H{o} recurrence relations, identify the analogues of the Verblunsky coefficients, and prove the Christoffel--Darboux formula. These results stand directly in analogue with the nearest neighbour recurrence relations from the real line counterpart.
\end{abstract}
\maketitle

\section{Introduction}


The theory of multiple orthogonal polynomials on the real line (MOPRL for short) deals with polynomials that are orthogonal with respect to several measures on the real line. These polynomials initially appeared in Hermite--Padé approximations of Markov functions. Today MOPRL theory is very well developed with applications in approximation theory, spectral theory, random matrix theory, and integrable probability. See \cite{Applications} for a recent quick introduction to MORPL and its applications, and \cite{Aptekarev,Ismail,bookNS} for a more thorough treatment.


While the MOPRL theory is well-developed, the theory of multiple orthogonal polynomials {\it on the unit circle} (MOPUC) is still at its infancy. It was introduced in \cite{mopuc}, motivated by applications in approximation theory and prediction theory. In particular, these polynomials appear when studying Hermite--Padé approximations of Carathéodory functions. Since then, MOPUC has only been further studied once, in \cite{mopuc recurrence}. The goal of this article is to encourage further development in MOPUC, by deriving 
the analogues of two MOPRL results that are important milestones in the theory.

The first result is the Christoffel--Darboux formula
by Daems and Kuijlaars~\cite{DaeKui}  (see also~\cite{AFFM} for the more general setting), which is the starting point of many further applications of MOPRL to random matrix theory and Markov processes for non-colliding particles, see~\cite{Bor,DaeKui07,Kui,KuiMar,Dui18,DFK}.

The other important advance in this theory was the paper \cite{NNR} by Van Assche  showing that MOPRL satisfy the so-called nearest neighbour recurrence relations, which is the generalization of the three-term recurrence relation of the usual orthogonal polynomials on the real line. In particular, these relations 
became a simple and natural tool for studying asymptotics of MOPRL along {\it every} direction of $\Z_+^r$, rather than just along the stepline multi-indices (see \cite{AK,NeuvA,NdavA,vA16,MFS,SwivA} among others). The recurrences also provide the connection of MOPRL to the spectral theory of Jacobi operators on trees, a very recent important development \cite{Jacobi operator on trees,ADY2,DY} in this area. 


We believe that the lack of progress in MOPUC is a consequence of not having the correct analogue of the Szeg\H{o} recurrence relation from the theory of orthogonal polynomials on the unit circle (OPUC). In this paper we present recurrence relations that not only generalize the recurrence coefficients of OPUC, but are also a perfect analogue of the nearest neighbour recurrence relation from MOPRL. This opens up the theory to further progress, which we illustrate by proving a Christoffel--Darboux formula.




We start by reviewing the basics of {OPUC.} Let  $\mu$ be a probability measure on the unit circle $\partial\D = \set{z\in\C:|z|=1}$ having infinite support. The associated inner product on $L^2(\mu)$ is
\begin{equation}\label{eq:inner}
    \jap{f(z),g(z)} = \int_0^{2\pi} f(e^{i\theta}) \overline{g(e^{i\theta})} \dd\mu(e^{i\theta}).
\end{equation}
Applying the Gram--Schmidt algorithm to the sequence of monomials $\{z^j\}_{j=0}^\infty$ in $L^2(\mu)$, one obtains the sequence of monic orthogonal polynomials $\{\Phi_n(z)\}_{n=0}^\infty$ satisfying
\begin{equation}
    \jap{\Phi_n(z),z^j} = 0, \qquad j = 0,\ldots,n-1.
\end{equation}

The fundamental result of OPUC
is the Szeg\H{o} recurrence relations given by
\begin{align}
\label{eq:scalar szego 1}
\Phi_{n+1}(z) & = z\Phi_n(z) + {\alpha}_{n+1}\Phi^*_{n}(z),
\\
\label{eq:scalar szego 2}
\Phi^*_{n+1}(z) & =  \Phi^*_n(z) + \bar\alpha_{n+1}z\Phi_{n}(z),
\end{align}
for $n \in \Z_+:=\{0,1,2,\ldots\}$, where $\Phi_n^*(z) = z^n\overline{\Phi_n(1/\bar z)}$ is the reversed polynomial. Equivalently, $\Phi_n^*$ can  be defined as the unique polynomial that satisfies 
\begin{equation}\label{eq:Phi^*}
    \jap{\Phi^*_n(z),z^p} = 0, \qquad p=1,\ldots,n,
\end{equation}
with the normalization $\Phi_n^*(0)=1$. We also have the inverse Szeg\H{o} recurrence
\begin{align}
    \label{eq:scalar szego 3}
    \Phi_{n+1}(z) &= {\alpha}_{n+1}\Phi^*_{n+1}(z) + \rho_{n+1}z\Phi_n(z),
    \\
    \label{eq:scalar szego 4}
    \Phi^*_{n+1}(z) &= \bar\alpha_{n+1}\Phi_{n+1}(z) + \rho_{n+1}\Phi^*_n(z),
\end{align}
where $\rho_n = 1 - \abs{\alpha_n}^2$. 

The recurrence coefficients $\alpha_n$ (with $n\in\N:=\{1,2,3,\ldots\}$) belong to complex unit disc $\D = \set{z \in \C : \abs{z} < 1}$, and are called the Verblunsky coefficients of $\mu$ (also sometimes referred to as the Schur, Geronimus, or reflection coefficients). Note that we are using $\alpha_{n+1}$ in place where it is traditional (nowadays) to use $-\bar\alpha_n$ (see the discussion in~\cite[p.10]{Simon}). With such a choice we get $\Phi_{n}(0) = \alpha_n$ and $\Phi_n^*(z) =\bar \alpha_n z^n + o(z^{n})$, which will be natural for our purposes later. Note also that our $\rho_n$ is $1 - \abs{\alpha_n}^2$ instead of the traditional $(1 - \abs{\alpha_n}^2)^{1/2}$. 



Multiple orthogonal polynomials on the unit circle are polynomials that satisfy orthogonality conditions with respect to a system of measures $\bm{\mu} = \set{\mu_j}_{j=1}^r$. For a multi-index $\bm{n} = (n_1,\dots,n_r)\in\Z_+^r$, we write $\abs{\bm{n}} = n_1+\ldots+n_r$, and the polynomials $\Phi_{\bm{n}}$ we want to consider are monic with $\deg{\Phi_{\bm{n}}} = \abs{\bm{n}}$ and 
\nm{eq:mopuc eq0}{\jap{\Phi_{\bm{n}}(z),z^p}_j = 0,\qquad p = 0,\dots,n_j-1 ,\qquad j = 1,\dots,r,}
where $\jap{\cdot,\cdot}_j$ is the inner product~\eqref{eq:inner} but with $\mu_j$ instead of $\mu$. Such polynomials are called the type II multiple orthogonal polynomials. In analogy with~\eqref{eq:Phi^*}, we also want to consider polynomials $\Phi_{\bm{n}}^*$ with $\deg\Phi_{\bm{n}}^* \leq \abs{\bm{n}}$,
$\Phi_{\bm{n}}^*(0)=1$ and 
\nm{eq:mopuc eq0 2}{\jap{\Phi_{\bm{n}}^*(z),z^p}_j = 0,\qquad p = 1,\ldots,n_j ,\qquad j = 1,\dots,r.}
We stress that these are no longer the reversed polynomials (unless $r=1$).

One of the main results of this paper is that $\Phi_{\bm{n}}$ and $\Phi^*_{\bm{n}}$  satisfy the following Szeg\H{o}-type recurrences: 
\begin{align}
\label{eq:second recurrence0}
\Phi_{\bm{n}}(z) & = \alpha_{\bm{n}}\Phi_{\bm{n}}^*(z) + \sum_{i = 1}^r \rho_{\bm{n},i}z\Phi_{\bm{n}-\bm{e}_i}(z)
\\
\label{eq:first recurrence0}
\Phi_{\bm{n}}^*(z) & = \Phi_{\bm{n}-\bm{e}_k}^*(z) + \beta_{\bm{n}}z\Phi_{\bm{n}-\bm{e}_k}(z),
\end{align}
for some coefficients $\alpha_{\bm{n}}$, $\beta_{\bm{n}}$, and $\rho_{\bm{n},1},\dots,\rho_{\bm{n},r}$. These generalize the recurrences~\eqref{eq:scalar szego 3} and~\eqref{eq:scalar szego 2}.
Note that if all the $\alpha_{\bm{n}}$,  $\beta_{\bm{n}}$,  $\rho_{\bm{n},j}$ are known, then equations~\eqref{eq:second recurrence0}  and~\eqref{eq:first recurrence0}
are sufficient to compute all $\Phi_{\bm{n}}$'s and $\Phi_{\bm{n}}^*$'s recursively. 

The generalizations of \eqref{eq:scalar szego 4} and \eqref{eq:scalar szego 1} appear as easy consequences of \eqref{eq:second recurrence0} and \eqref{eq:first recurrence0}, however these equations seem less natural and the generalization of \eqref{eq:scalar szego 1} gives no information in the case $\beta_{\bm{n}} = 0$. Interestingly, the situation is opposite for type I multiple orthogonal polynomials
: we get generalizations of \eqref{eq:scalar szego 1} and \eqref{eq:scalar szego 4}, while the generalizations of \eqref{eq:scalar szego 2} and \eqref{eq:scalar szego 3} appear less natural, and the generalization of \eqref{eq:scalar szego 3} vanishes in the case $\alpha_{\bm{n}} = 0$. 

The coefficients $\{\alpha_{\bm{n}}\}$ and $\{\beta_{\bm{n}}\}$ should be viewed as the generalized Verblunsky/reflection coefficients.  Indeed, $\Phi_{\bm{n}}(0) = \beta_{\bm{n}}$ and $\Phi_{\bm{n}}^*(z) = \alpha_{\bm{n}} z^{|{\bm{n}}|} + o(z^{|{\bm{n}}|})$, similarly to the usual OPUC. Furthermore, for the marginal indices $\bm{n} = j \bm{e}_k$, the recurrences \eqref{eq:first recurrence0} and \eqref{eq:second recurrence0} become the usual Szeg\H{o} recurrences \eqref{eq:scalar szego 2} and ~\eqref{eq:scalar szego 3}, with $\alpha_{j \bm{e}_k}$, $\beta_{j \bm{e}_k}$, $\rho_{j \bm{e}_k,k}$ reduced to the usual OPUC recurrence coefficients $\alpha_{j}(\mu_k)$,   $\bar\alpha_{j}(\mu_k)$,  $\rho_{j}(\mu_k)$ associated with $\mu_k$, respectively (or  $-\bar\alpha_{j-1}(\mu_k)$,   $-\alpha_{j-1}(\mu_k)$,  $\rho_{j-1}(\mu_k)^2$ in the notation of ~\cite{Simon}), while  $\rho_{j \bm{e}_k,m} = 0$ for $m\ne k$. We note that the importance of 
the multiple Verblunsky coefficient $\alpha_{\bm{n}}$ was also observed in ~\cite{mopuc recurrence} (where it was denoted by $\delta_{n,m}$).

It is worth mentioning that for the recurrence relations \eqref{eq:second recurrence0} and \eqref{eq:first recurrence0} to hold, it is necessary that the polynomials appearing in these equations are uniquely determined by~\eqref{eq:mopuc eq0} and~\eqref{eq:mopuc eq0 2} (we say that the corresponding indices are then {\it normal}). This is automatic in the one-measure case, but for multiple orthogonality this is not always true, both for MOPRL and MOPUC. In the MOPRL setting, there are several wide classes of systems (Angelesco, AT, Nikishin) where normality can be shown to hold at every index. 
It would be very valuable to find analogous MOPUC classes since explicit examples of MOPUC systems where normality is proven are currently rare  (see \cite[Sect 3]{mopuc recurrence} and \cite[Sect 4]{mopuc}). Note however that normality of an index $\bm{n}$  can be stated in terms of a certain determinantal condition 
on the moments of $\set{\mu_j}_{j=1}^r$. Systems of measures that satisfy such a  condition therefore form a codimension one submanifold. From this point of view relations \eqref{eq:second recurrence0} and \eqref{eq:first recurrence0} should be regarded to hold generically.

Both the results and the methods in this paper were heavily inspired by  Van Assche's MOPRL paper \cite{NNR} {(see also \cite{Ismail})}. In order to understand the current paper however, it is not necessary to know any background from MOPRL, as we tried to make the paper  self-contained and accessible to a broad audience.

The structure of the paper is as follows. In Section~\ref{ss:normality} we discuss basic definitions and the question of uniqueness of multiple orthogonal polynomials. In Section~\ref{ss:typeII} we prove Szeg\H{o}'s recurrences for type II and II$^*$ polynomials, and in Section~\ref{ss:typeI} we do the same for type I and I$^*$. In Section~\ref{ss:compatibility} we show that the recurrence coefficients satisfy a set of partial difference equations very similar to the real line counterpart. Finally, in Section~\ref{ss:CD} we prove the Christoffel--Darboux formula.




\section{Normality}\label{ss:normality}




Given a multi-index $\bm{n}\in\Z_+^r$, a type II multiple orthogonal polynomial is a non-zero polynomial $\Phi_{\bm{n}}$ such that $\deg{\Phi_{\bm{n}}} \leq \abs{\bm{n}}$ and
\nm{eq:mopuc eq}{\jap{\Phi_{\bm{n}},z^p}_j = 0,\qquad p = 0,\dots,n_j-1 ,\qquad j = 1,\dots,r.}
We also define a type II$^*$ multiple orthogonal polynomial to be a non-zero polynomial $\Phi_{\bm{n}}^*$ such that $\deg\Phi_{\bm{n}}^* \leq \abs{\bm{n}}$ and
\nm{eq:mopuc eq 2}{\jap{\Phi_{\bm{n}}^*,z^p}_j = 0,\qquad p = 1,\dots,n_j ,\qquad j = 1,\dots,r.}


A type I multiple orthogonal polynomial, for the multi-index $\bm{n}$, is a non-zero vector of polynomials $\bm{\Lambda_{\bm{n}}} = (\Lambda_{\bm{n},1},\dots,\Lambda_{\bm{n},r})$ such that $\deg{\Lambda_{\bm{n},j}} \leq n_j-1$ and
\nm{eq:mopuc eq type 1}{\sum_{j = 1}^r \jap{\Lambda_{\bm{n},j}, z^p}_j = 0,\qquad p = 0,1,\dots,\abs{\bm{n}}-2.}
Lastly, we define a type I$^*$ multiple orthogonal polynomial, to be a non-zero vector of polynomials $\bm{\Lambda}_{\bm{n}}^* = (\Lambda^*_{\bm{n},1},\dots,\Lambda^*_{\bm{n},r})$ such that $\deg{\Lambda^*_{\bm{n},j}} \leq n_j-1$ and
\nm{eq:mopuc eq 2 type 1}{\sum_{j = 1}^r \jap{\Lambda^*_{\bm{n},j}, z^p}_j = 0,\qquad p = 1,2,\dots,\abs{\bm{n}}-1.}
We may also refer to each $\Lambda_{\bm{n},j}$ and $\Lambda_{\bm{n},j}^*$ as type I and type I$^*$ polynomials, respectively. Note that $\Lambda_{\bm{n},j} = \Lambda_{\bm{n},j}^* = 0$ is the only possibility when $n_j = 0$ (we take the degree of $0$ to be $-\infty$). For $\bm{n} = \bm{0}$ we just define $\bm\Lambda_{\bm{0}} = \bm{0}$ and $\bm\Lambda_{\bm{0}}^* = \bm{0}$ as the only type I and type I$^*$ polynomials.




\textup{Consider the matrix 
\nm{eq:mopuc matrix}
{ M_{\bm{n}} = \begin{pmatrix}
\nu_1^0 & \nu_1^1 & \cdots & \nu_1^{\abs{\bm{n}}-1} \\
\nu_1^{-1} & \nu_1^0 & \cdots & \nu_1^{\abs{\bm{n}}-2} \\
\vdots & \vdots & \ddots & \vdots \\
\nu_1^{1-n_1} & \nu_1^{2-n_1} & \cdots & \nu_1^{\abs{\bm{n}}-n_1} \\
\hline
& & \vdots \\
\hline
\nu_{r}^0 & \nu_{r}^1 & \cdots & \nu_{r}^{\abs{\bm{n}}-1} \\
\nu_{r}^{-1} & \nu_{r}^0 & \cdots &  \nu_{r}^{\abs{\bm{n}}-2} \\
\vdots & \vdots & \ddots & \vdots \\
\nu_{r}^{1-n_r} & \nu_{r}^{2-n_r} & \cdots & \nu_{r}^{\abs{\bm{n}}-n_r} \\
\end{pmatrix},}
where  $\nu_j^p = \int z^p \dd\mu_j(z)$ are the moments of $\mu_j$.
    We say that the index $\bm{n}\ne\bm{0}$ is normal if $\det M_{\bm{n}}\ne 0$.
    }
 This condition ensures uniqueness of our polynomials at the location $\bm{n}$ if we choose the appropriate normalization. We take $\bm{n} = \bm{0}$ to always be normal.

\begin{lem} \label{lem:normality}
An index $\bm{n}\neq\bm{0}$ is normal if and only if any of the following conditions hold:
\begin{itemize}
    \item[(i)] $\deg \Phi_{\bm{n}} = \abs{\bm{n}}$ for every type II polynomial $\Phi_{\bm{n}}$.
    \item[(ii)] $\Phi_{\bm{n}}^*(0) \neq 0$ for every type II$^*$ polynomial $\Phi_{\bm{n}}^*$.    
    \item[(iii)] $\sum_{j = 1}^r \jap{\Lambda_{\bm{n},j}, z^{\abs{\bm{n}}-1}}_j \neq 0$ for every non-zero type I polynomial $\bm{\Lambda_{\bm{n}}}$.
    \item[(iv)] $\sum_{j = 1}^r \jap{\Lambda^*_{\bm{n},j}, 1}_j \neq 0$ for every non-zero type I$^*$ polynomial $\bm{\Lambda_{\bm{n}}}^*$.
\end{itemize}
\end{lem}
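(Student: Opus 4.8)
The plan is to reduce all four equivalences to a single fact of linear algebra applied to $M_{\bm n}$ and to its conjugate transpose. The underlying principle is this: if $S$ is a square matrix, then deleting one column produces a homogeneous system on which the deleted coordinate is forced to be nonzero for every nonzero solution precisely when $S$ is invertible; dually, deleting one row produces a system on which the omitted linear functional is nonzero on every nonzero solution precisely when $S$ is invertible. So the whole proof comes down to identifying, for each of (i)--(iv), the correct square matrix and checking two things: that deleting the appropriate single row or column returns exactly the defining orthogonality system, and that the ``extra'' coordinate (resp.\ functional) is exactly the quantity named in the condition.

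First I would translate the orthogonality conditions into coefficient form. On $\partial\D$ we have $\overline{z^p}=z^{-p}$, so for $f(z)=\sum_k c_k z^k$ the pairing $\jap{f,z^p}_j$ equals $\sum_k c_k \nu_j^{k-p}$. Writing each candidate polynomial in the monomial basis turns the relations \eqref{eq:mopuc eq}--\eqref{eq:mopuc eq 2 type 1} into homogeneous linear systems whose entries are all of the form $\nu_j^{k-p}$, which is precisely the shape of the entries of $M_{\bm n}$ in \eqref{eq:mopuc matrix}.

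Then I would run through the four cases. For a type II polynomial $\Phi_{\bm n}=\sum_{k=0}^{N}c_k z^k$ with $N=\abs{\bm n}$, the system \eqref{eq:mopuc eq} is $Ac=0$, where $A$ is $N\times(N+1)$ with rows indexed by $(j,p)$, $p=0,\dots,n_j-1$, columns by $k=0,\dots,N$, and entry $\nu_j^{k-p}$; deleting the last column (the one multiplying the leading coefficient $c_N$) leaves exactly $M_{\bm n}$. Hence $\deg\Phi_{\bm n}=N$ for every type II polynomial, i.e.\ $c_N\neq0$ on every nonzero solution, iff $\det M_{\bm n}\neq0$, giving (i). For type II$^*$ the analogous matrix uses $p=1,\dots,n_j$, and here it is the \emph{first} column, multiplying $\Phi_{\bm n}^*(0)$, whose deletion returns $M_{\bm n}$ (after shifting the row and column indices by one); the same principle yields (ii). For type I and I$^*$ the roles of rows and columns are interchanged: the coefficient vector now has length $N$, and the systems \eqref{eq:mopuc eq type 1} and \eqref{eq:mopuc eq 2 type 1} are obtained from the square $N\times N$ matrix $\widetilde C$ with rows $q=0,\dots,N-1$, columns $(j,l)$, $l=0,\dots,n_j-1$, and entry $\nu_j^{l-q}$, by deleting respectively the row $q=\abs{\bm n}-1$ and the row $q=0$. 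The omitted functional is exactly the quantity appearing in (iii), resp.\ (iv), so by the dual form of the principle each holds iff $\det\widetilde C\neq0$.

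It remains to connect $\widetilde C$ to $M_{\bm n}$. Since each $\mu_j$ is a positive measure, $\nu_j^{-m}=\overline{\nu_j^{\,m}}$, and comparing indices gives the entrywise identity $\widetilde C=\overline{M_{\bm n}^{\mathsf T}}$, whence $\det\widetilde C=\overline{\det M_{\bm n}}$ and in particular $\det\widetilde C\neq0\iff\det M_{\bm n}\neq0$. This closes all four equivalences. The step needing the most care is the index bookkeeping: verifying that after deleting a single row or column the remaining submatrix genuinely coincides with $M_{\bm n}$ (for II and II$^*$) or with $\overline{M_{\bm n}^{\mathsf T}}$ (for I and I$^*$), and that in each case the deleted row or column is exactly the coefficient or pairing named in the lemma. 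Once the alignment is pinned down, the rest is the standard rank/Cramer argument together with the elementary observation that a nonzero kernel vector of the smaller system extends to a genuine (nonzero) polynomial of the required type.
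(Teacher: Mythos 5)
Your proof is correct and takes essentially the same route as the paper's: both translate each orthogonality system into coefficient form, identify (i)--(ii) with the invertibility of $M_{\bm{n}}$ and (iii)--(iv) with the invertibility of its transpose, and use the standard add/delete-a-row-or-column kernel argument. The only difference is a small refinement: you note that the type I/I$^*$ coefficient matrix is really $\overline{M_{\bm{n}}^T}$ (via $\nu_j^{-m}=\overline{\nu_j^m}$) rather than $M_{\bm{n}}^T$ as the paper states, which changes nothing since invertibility is preserved under conjugation.
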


\begin{rem}\label{rem:a-d}
    \textup{We show in the proof that  normality of $\bm{n}\ne \bm{0}$ 
    is also equivalent to any of the following statements:
    \begin{itemize}
    \item[(a)] There is a unique monic type II multiple orthogonal polynomial $\Phi_{\bm{n}}$ such that $\deg{\Phi_{\bm{n}}} = \abs{\bm{n}}$.
    \item[(b)] There is a unique type II$^*$ multiple orthogonal polynomial $\Phi^*_{\bm{n}}$ such that $\Phi^*_{\bm{n}}(0) = 1$.
    \item[(c)] There is a unique type I multiple orthogonal polynomial $(\Lambda_{\bm{n},1},\dots,\Lambda_{\bm{n},r})$ such that $\sum_{j = 1}^r \jap{\Lambda_{\bm{n},j}, z^{\abs{\bm{n}}-1}}_j = 1$. 
    \item[(d)] There is a unique type I$^*$ multiple orthogonal polynomial $(\Lambda_{\bm{n},1}^*,\dots,\Lambda_{\bm{n},r}^*)$ such that $\sum_{j = 1}^r \jap{\Lambda^*_{\bm{n},j}, 1}_j = 1$.
    \end{itemize}
    These normalizations will be used in all the future sections. Note that $\set{\Phi_{j\bm{e}_k}}_{j = 0}^{\infty}$ are the monic orthogonal polynomials with respect to $\mu_k$, and $\Lambda_{(j+1)\bm{e}_k,k} = \tfrac{1}{||\Phi_{j\bm{e}_k}||^2_k} \Phi_{j\bm{e}_k}$.}
\end{rem}


\begin{proof}
Solving the system (\ref{eq:mopuc eq}) for polynomials of the form $c_{\abs{\bm{n}}-1}z^{\abs{\bm{n}}-1} + \dots + c_{0}$ results in a linear system with coefficient matrix $M_{\bm{n}}$. Hence $c_{\abs{\bm{n}}-1} = \dots = c_{0} = 0$ if and only if this matrix is invertible. If we instead solve for $z^{\abs{\bm{n}}} + c_{\abs{\bm{n}}-1}z^{\abs{\bm{n}}-1} + \dots + c_{0}$ we again get a linear system with coefficient matrix $M_{\bm{n}}$. This proves that normality is equivalent to (i) and (a). The system (\ref{eq:mopuc eq 2}) for $c_{\abs{\bm{n}}}z^{\abs{\bm{n}}} + \dots + c_1z$, as well as $c_{\abs{\bm{n}}}z^{\abs{\bm{n}}} + \dots + c_1z + 1$, also has the same coefficient matrix. This proves that normality is equivalent to (ii) and (b). 
Next, the system of equations
\m{\sum_{j = 1}^r \jap{\Lambda_{\bm{n},j}, z^p}_j = 0, \qquad p = 0,1,\dots,\abs{\bm{n}}-1}
is homogeneous with coefficient matrix $M_{\bm{n}}^T$, so the existence of $(\Lambda_{\bm{n},1},\dots,\Lambda_{\bm{n},r}) \neq \bm{0}$ satisfying this extended set of equations is equivalent to $M_{\bm{n}}^T$ not being invertible. Similarly, if we change the right hand side to be $1$ for $p = \abs{\bm{n}}-1$ we see that we have a unique solution exactly when $M_{\bm{n}}^T$ is invertible. The same argument works for $(\Lambda^*_{\bm{n},1},\dots,\Lambda^*_{\bm{n},r})$.
\end{proof}

We will also need the following lemma, cf.~\cite[Cor. 23.1.1--23.1.2]{Ismail} for MOPRL.



\begin{lem} \label{lem:normality lemma}
$\bm{n} + \bm{e}_k$ is normal if and only if we have $\jap{\Phi_{\bm{n}},z^{n_k}}_k \neq 0$ for every type II multiple orthogonal polynomial $\Phi_{\bm{n}}$ for the index $\bm{n}$. Similarly, $\bm{n} - \bm{e}_k$ is normal if and only if $\deg{\Lambda_{\bm{n},k}} = n_k - 1$ for every type I multiple orthogonal polynomial $\Lambda_{\bm{n},k}$ for the index $\bm{n}$.
\end{lem}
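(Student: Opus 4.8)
The plan is to deduce both statements from Lemma~\ref{lem:normality} by comparing, at neighbouring indices, the linear conditions that define the polynomials. The guiding observation is that passing from $\bm{n}$ to $\bm{n}+\bm{e}_k$ (for type II) or from $\bm{n}$ to $\bm{n}-\bm{e}_k$ (for type I) changes exactly one orthogonality requirement and one degree bound, so the two polynomial families are related by a single extra constraint. Each part then reduces to matching these constraints against characterization (i), respectively (iii), of Lemma~\ref{lem:normality}.

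For the first statement I would start from the fact that a type II polynomial $\Phi_{\bm{n}}$ (which always exists, the defining system being underdetermined) satisfies $\jap{\Phi_{\bm{n}},z^p}_j=0$ for $p=0,\dots,n_j-1$, i.e.\ exactly the conditions defining type II polynomials for $\bm{n}+\bm{e}_k$ \emph{except} possibly the single condition $\jap{\Phi_{\bm{n}},z^{n_k}}_k=0$. Since $\deg\Phi_{\bm{n}}\le\abs{\bm{n}}<\abs{\bm{n}+\bm{e}_k}$, this yields an exact dictionary: a nonzero $\Phi$ is a type II polynomial for $\bm{n}$ with $\jap{\Phi,z^{n_k}}_k=0$ if and only if it is a type II polynomial for $\bm{n}+\bm{e}_k$ of degree strictly below $\abs{\bm{n}+\bm{e}_k}$. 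By Lemma~\ref{lem:normality}(i) the existence of the latter is precisely the failure of normality of $\bm{n}+\bm{e}_k$; negating gives that $\bm{n}+\bm{e}_k$ is normal iff $\jap{\Phi_{\bm{n}},z^{n_k}}_k\ne0$ for every type II $\Phi_{\bm{n}}$.

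For the second statement I would argue dually, using characterization (iii) for the index $\bm{m}=\bm{n}-\bm{e}_k$, whose extra condition sits at $p=\abs{\bm{m}}-1=\abs{\bm{n}}-2$. The key bookkeeping is that a vector $(\Lambda_{\bm{n},1},\dots,\Lambda_{\bm{n},r})$ is a type I polynomial for $\bm{n}$ with $\deg\Lambda_{\bm{n},k}\le n_k-2$ if and only if it is a type I polynomial for $\bm{n}-\bm{e}_k$ satisfying in addition $\sum_{j=1}^r\jap{\Lambda_{\bm{n},j},z^{\abs{\bm{n}}-2}}_j=0$: lowering the $k$-th degree bound by one turns the degree constraints of $\bm{n}$ into those of $\bm{n}-\bm{e}_k$, while the orthogonality range $p=0,\dots,\abs{\bm{n}}-2$ for $\bm{n}$ splits into the range $p=0,\dots,\abs{\bm{n}}-3$ for $\bm{n}-\bm{e}_k$ together with the extra condition at $p=\abs{\bm{n}}-2$. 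Since every type I polynomial for $\bm{n}$ automatically has $\deg\Lambda_{\bm{n},k}\le n_k-1$, the failure of ``$\deg\Lambda_{\bm{n},k}=n_k-1$ for all type I'' is exactly the existence of such a sub-maximal solution, which by Lemma~\ref{lem:normality}(iii) is exactly the non-normality of $\bm{n}-\bm{e}_k$; negating gives the claim.

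The content is entirely in getting this matching right, so the main thing to watch is the index arithmetic---in particular that $\abs{\bm{n}-\bm{e}_k}-1=\abs{\bm{n}}-2$ lines up the top orthogonality condition of type I at $\bm{n}$ with the normality condition (iii) at $\bm{n}-\bm{e}_k$---rather than any analytic difficulty. I would also dispose of the degenerate indices directly: for the first statement $\bm{n}=\bm{0}$ (where $\Phi_{\bm{0}}$ is a nonzero constant and $\jap{\Phi_{\bm{0}},1}_k=\Phi_{\bm{0}}\,\nu_k^0\ne0$, matching that $\bm{e}_k$ is normal), and for the second the case $\bm{n}=\bm{e}_k$, i.e.\ $\bm{n}-\bm{e}_k=\bm{0}$, where a type I vector for $\bm{e}_k$ is forced to be a nonzero constant in its $k$-th slot, so $\deg\Lambda_{\bm{e}_k,k}=0=n_k-1$ holds and agrees with $\bm{0}$ being normal by convention.
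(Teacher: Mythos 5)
Your proposal is correct and follows essentially the same route as the paper's proof: both directions of each equivalence are obtained by transferring a nonzero solution between the linear systems at the neighbouring indices (a type II polynomial at $\bm{n}$ with $\jap{\Phi_{\bm{n}},z^{n_k}}_k=0$ versus a degree-deficient one at $\bm{n}+\bm{e}_k$, and a type I vector at $\bm{n}$ with $\deg\Lambda_{\bm{n},k}\le n_k-2$ versus one at $\bm{n}-\bm{e}_k$ satisfying the extra condition at $p=\abs{\bm{n}}-2$), then invoking characterizations (i) and (iii) of Lemma~\ref{lem:normality}. Your explicit disposal of the degenerate indices $\bm{n}=\bm{0}$ and $\bm{n}-\bm{e}_k=\bm{0}$ is slightly more careful than the paper's brief remark that the $n_k=0$ case is obvious, but it is the same argument.
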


\begin{proof}
If $\bm{n}+\bm{e}_k$ is not normal then for this index there is some non-zero solution $\Phi_{\bm{n}+\bm{e}_k}$ of (\ref{eq:mopuc eq}) with $\deg{\Phi_{\bm{n}+\bm{e}_k}} \leq \abs{\bm{n}}$. But then $\Phi_{\bm{n}} = \Phi_{\bm{n} + \bm{e}_k}$ is a solution of (\ref{eq:mopuc eq}) for the index $\bm{n}$ as well, with $\jap{\Phi_{\bm{n}},z^{n_k}}_k = 0$. Conversely, if $\jap{\Phi_{\bm{n}},z^{n_k}}_k = 0$ and $\Phi_{\bm{n}}$ is a solution of (\ref{eq:mopuc eq}) for the index $\bm{n}$, then $\Phi_{\bm{n}}$ is a solution of (\ref{eq:mopuc eq}) for the index $\bm{n}+\bm{e}_k$. Since $\deg{\Phi_{\bm{n}}} < \abs{\bm{n}+\bm{e}_k} = \abs{\bm{n}} + 1$ it follows that $\bm{n}+\bm{e}_k$ cannot be normal. 

For the second statement, assume $n_k > 0$ (and otherwise the statement is obviously true). If $\bm{n} - \bm{e}_k$ is not normal then there is a $(\Lambda_{\bm{n}-\bm{e}_k,1},\dots,\Lambda_{\bm{n}-\bm{e}_k,r}) \neq \bm{0}$ such that
\m{\sum_{j = 1}^r \jap{\Lambda_{\bm{n}-\bm{e}_k,j}, z^p}_j = 0,\qquad p = 0,1,\dots,\abs{\bm{n}}-2.}
This is also a solution for the index $\bm{n}$, so we have a solution $(\Lambda_{\bm{n},1},\dots,\Lambda_{\bm{n},r}) \neq \bm{0}$ with $\deg{\Lambda_{\bm{n},j}} < n_j - 1$. Conversely, if $\deg{\Lambda_{\bm{n},j}} < n_j - 1$ and $(\Lambda_{\bm{n},1},\dots,\Lambda_{\bm{n},r}) \neq \bm{0}$ then this is a non-zero solution to (\ref{eq:mopuc eq type 1}) for $\bm{n} - \bm{e}_k$, and
\m{\sum_{j = 1}^r \jap{\Lambda_{\bm{n},j}, z^{\abs{\bm{n}}-2}}_j = 0,}
so $\bm{n} - \bm{e}_k$ cannot be normal.
\end{proof}

\section{Recurrence Relations for Type II Polynomials}\label{ss:typeII}

From now on, if an index $\bm{n}$ is normal then we only work with monic type II polynomials $\Phi_{\bm{n}}$, and type II* polynomials $\Phi^*_{\bm{n}}$ with $\Phi^*_{\bm{n}}(0) = 1$. These polynomials are then unique, as discussed in Remark~\ref{rem:a-d} (a) and (b). Also, if $\bm{n}\notin \Z_+^r$ we always take $\Phi_{\bm{n}}=0$. 
The next theorem is the generalization of the Szeg\H{o} recurrence relations~\eqref{eq:scalar szego 3} and ~\eqref{eq:scalar szego 2} from the OPUC theory.

\begin{thm}\label{thm:type 2 szego}
For each of the equations~\eqref{eq:second recurrence} and~\eqref{eq:first recurrence} we assume that all the $\Z_+^r$ multi-indices that appear in the corresponding equation are normal.
\begin{itemize}
    \item[(i)] 
There are complex numbers $\alpha_{\bm{n}}$ and $\rho_{\bm{n},1},\dots,\rho_{\bm{n},r}$ such that
\nm{eq:second recurrence}{
\Phi_{\bm{n}} = \alpha_{\bm{n}}\Phi_{\bm{n}}^* + \sum_{j = 1}^r \rho_{\bm{n},j}z\Phi_{\bm{n}-\bm{e}_j}.
}
    \item[(ii)] If $n_k>0$, there is a complex number $\beta_{\bm{n}}$, independent of $k$, such that 
    \nm{eq:first recurrence}{\Phi_{\bm{n}}^* = \Phi_{\bm{n}-\bm{e}_k}^* + \beta_{\bm{n}}z\Phi_{\bm{n}-\bm{e}_k}
    }
\end{itemize}
\end{thm}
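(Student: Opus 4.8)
The plan is to prove both recurrences by the same template: build an explicit candidate for the right-hand side, verify it meets the defining conditions of the left-hand polynomial, and then invoke the uniqueness guaranteed by normality (Remark~\ref{rem:a-d}) to conclude equality. Throughout, the single nonvanishing inner product supplied by Lemma~\ref{lem:normality lemma} is what makes the construction go through. I would do (ii) first, as it is the cleaner of the two.

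For (ii), fix $k$ with $n_k>0$ and set $R=\Phi^*_{\bm{n}-\bm{e}_k}+\beta z\Phi_{\bm{n}-\bm{e}_k}$ with $\beta$ undetermined. Since $\Phi^*_{\bm{n}-\bm{e}_k}(0)=1$ and $z\Phi_{\bm{n}-\bm{e}_k}$ vanishes at $0$, we get $R(0)=1$ and $\deg R\le|\bm{n}|$, matching the normalization of $\Phi^*_{\bm{n}}$. The key bookkeeping is that $\langle R,z^p\rangle_j=0$ holds automatically for every required pair except $(k,n_k)$: multiplying by $z$ shifts the orthogonality ranges so that both $\Phi^*_{\bm{n}-\bm{e}_k}$ and $z\Phi_{\bm{n}-\bm{e}_k}$ are orthogonal to $z^p$ for $p=1,\dots,n_j$ when $j\neq k$ and for $p=1,\dots,n_k-1$ when $j=k$. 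The one remaining equation $\langle R,z^{n_k}\rangle_k=0$ reads $\langle\Phi^*_{\bm{n}-\bm{e}_k},z^{n_k}\rangle_k+\beta\langle\Phi_{\bm{n}-\bm{e}_k},z^{n_k-1}\rangle_k=0$, and because $\bm{n}=(\bm{n}-\bm{e}_k)+\bm{e}_k$ is normal, Lemma~\ref{lem:normality lemma} gives $\langle\Phi_{\bm{n}-\bm{e}_k},z^{n_k-1}\rangle_k\neq0$, so $\beta$ is uniquely determined. With this $\beta$ the polynomial $R$ satisfies all defining conditions of $\Phi^*_{\bm{n}}$, so normality of $\bm{n}$ forces $R=\Phi^*_{\bm{n}}$. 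Independence of $\beta$ from $k$ is then automatic: comparing coefficients of $z^{|\bm{n}|}$ in $\Phi^*_{\bm{n}}=\Phi^*_{\bm{n}-\bm{e}_k}+\beta z\Phi_{\bm{n}-\bm{e}_k}$ shows $\beta$ equals the leading coefficient of $\Phi^*_{\bm{n}}$, which has no dependence on $k$.

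For (i), I would work by linear algebra inside $V=\{P:\deg P\le|\bm{n}|,\ \langle P,z^p\rangle_j=0,\ p=1,\dots,n_j-1,\ j=1,\dots,r\}$. A shift-by-$z$ check shows $\Phi_{\bm{n}}$, $\Phi^*_{\bm{n}}$, and each $z\Phi_{\bm{n}-\bm{e}_j}$ (read as $0$ when $n_j=0$) all lie in $V$. Writing $s=\#\{j:n_j>0\}$, I would first show that $\Phi^*_{\bm{n}}$ together with $\{z\Phi_{\bm{n}-\bm{e}_j}\}_{n_j>0}$ are linearly independent: evaluating a dependence at $z=0$ kills the $\Phi^*_{\bm{n}}$ term, and pairing the resulting relation $\sum_j b_j\Phi_{\bm{n}-\bm{e}_j}=0$ against $z^{n_i-1}$ in $\mu_i$ isolates $b_i\langle\Phi_{\bm{n}-\bm{e}_i},z^{n_i-1}\rangle_i$, forcing $b_i=0$ again by Lemma~\ref{lem:normality lemma}. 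This gives $\dim V\ge s+1$. For the matching upper bound I would use the map $V\to\C^s$, $P\mapsto(\langle P,1\rangle_j)_{n_j>0}$, whose kernel is exactly the solution space of the type II conditions for $\bm{n}$, spanned by $\Phi_{\bm{n}}$ and hence one-dimensional by normality of $\bm{n}$; rank--nullity yields $\dim V\le s+1$. Thus the independent family is a basis of $V$, and expanding $\Phi_{\bm{n}}\in V$ in it produces $\alpha_{\bm{n}}$ and the $\rho_{\bm{n},j}$ (with $\rho_{\bm{n},j}=0$ when $n_j=0$). I expect the dimension count for $V$ to be the main obstacle — in particular handling the degenerate coordinates $n_j=0$ and justifying that the kernel above is precisely the one-dimensional type II solution space — while everything else reduces to routine tracking of orthogonality ranges under multiplication by $z$.
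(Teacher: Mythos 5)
Your proposal is correct, and its engine --- dimension counts enabled by normality plus the nonvanishing inner product from Lemma~\ref{lem:normality lemma} --- is the same as the paper's, but the execution differs in two places worth recording. In part (i) the paper chooses $\alpha_{\bm{n}}$ to kill the constant term of $\Phi_{\bm{n}}-\alpha_{\bm{n}}\Phi^*_{\bm{n}}$, divides by $z$, and places the result in the null space of the matrix $M_{\bm{n}}'$ obtained from $M_{\bm{n}}$ by deleting $s_{\bm{n}}$ rows; normality of $\bm{n}$ makes that null space $s_{\bm{n}}$-dimensional, and the same independence computation you give (pairing against $z^{n_i-1}$ and invoking Lemma~\ref{lem:normality lemma}) shows $\set{\Phi_{\bm{n}-\bm{e}_j}}_{n_j>0}$ is a basis. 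Your space $V$ is exactly $\C\,\Phi^*_{\bm{n}}\oplus zN$, where $N$ is the paper's null space, so the two decompositions coincide; the only genuine difference is that you get the upper bound $\dim V\le s+1$ by rank--nullity applied to $P\mapsto(\jap{P,1}_j)_{n_j>0}$, identifying the kernel with the one-dimensional type II solution space at $\bm{n}$, rather than by counting the rank of the explicit moment matrix --- both are valid, and your version avoids matrix bookkeeping (including the $n_j=0$ degeneracies, which your setup handles correctly). In part (ii) your argument runs in the opposite direction from the paper's: the paper forms $z^{-1}(\Phi^*_{\bm{n}}-\Phi^*_{\bm{n}-\bm{e}_k})$, checks it satisfies the type II conditions at $\bm{n}-\bm{e}_k$, and invokes one-dimensionality of that solution space (normality of $\bm{n}-\bm{e}_k$), with no need for Lemma~\ref{lem:normality lemma}; you instead build the candidate $\Phi^*_{\bm{n}-\bm{e}_k}+\beta z\Phi_{\bm{n}-\bm{e}_k}$, solve for $\beta$ from the single outstanding orthogonality condition --- Lemma~\ref{lem:normality lemma} guaranteeing the divisor $\jap{\Phi_{\bm{n}-\bm{e}_k},z^{n_k-1}}_k$ is nonzero --- and appeal to uniqueness of $\Phi^*_{\bm{n}}$ at the normal index $\bm{n}$. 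What your route buys is an explicit formula $\beta=-\jap{\Phi^*_{\bm{n}-\bm{e}_k},z^{n_k}}_k/\jap{\Phi_{\bm{n}-\bm{e}_k},z^{n_k-1}}_k$; what the paper's buys is economy of hypotheses within the step. Both proofs conclude the $k$-independence identically, by reading off $\beta_{\bm{n}}$ as the $z^{|\bm{n}|}$-coefficient of $\Phi^*_{\bm{n}}$.
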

\begin{rem}
    \textup{The recurrence coefficients $\alpha_{\bm{n}}$ and $\beta_{\bm{n}}$ will be referred to as the multiple Verblunsky coefficients of  the system $\bm{\mu}$. Note that the recurrence relations imply that $\alpha_{\bm{n}} = \Phi_{\bm{n}}(0)$ and $\beta_{\bm{n}}$ is the $z^{|\bm{n}|}$-coefficient of $\Phi^*_{\bm{n}}$. Even if the recurrence relations do not hold, we can still define $\alpha_{\bm{n}}$ and $\beta_{\bm{n}}$ in this way, as long as $\bm{n}$ is normal. Also note that the recurrence relations uniquely determine $\alpha_{\bm{n}}$ and $\beta_{\bm{n}}$. The same holds for $\rho_{\bm{n},k}$, assuming $n_k > 0$. If $n_k = 0$ we can adopt the convention $\rho_{\bm{n},k} = 0$. Furthermore, for indices $\bm{n}$ of the form $n_k\bm{e}_k$ we get $\beta_{n_k\bm{e}_k} = \bar\alpha_{n_k\bm{e}_k}$, which become the usual Verblunsky (reflection) coefficients of $\mu_k$, $\rho_{n_k\bm{e}_k, j} = 1-|\alpha_{n_k\bm{e}_k}|^2$, and $\rho_{n_k\bm{e}_k,m}=0$ when $m\ne j$, so that ~\eqref{eq:second recurrence} and~\eqref{eq:first recurrence} reduce to the usual Szeg\H{o} recurrence relations \eqref{eq:scalar szego 3} and ~\eqref{eq:scalar szego 2} for the measure $\mu_j$. }
\end{rem}

\begin{proof}
(i) We choose $\alpha_{\bm{n}}$ such that $(\Phi_{\bm{n}} - \alpha_{\bm{n}}\Phi_{\bm{n}}^*)(0) = 0$ and then we get
\nm{eq:recurrence derivation}{\jap{z^{-1}(\Phi_{\bm{n}} - \alpha_{\bm{n}}\Phi_{\bm{n}}^*),z^p}_j & = \jap{\Phi_{\bm{n}},z^{p+1}}_j - \alpha_{\bm{n}}\jap{\Phi_{\bm{n}}^*,z^{p+1}}_j \\ & = 0,\hspace{0.3cm} p = 0,\dots,n_i-2,\hspace{0.3cm} j = 1,\dots,r.}
Note that the polynomial $z^{-1}(\Phi_{\bm{n}} - \alpha_{\bm{n}}\Phi_{\bm{n}}^*)$ has degree at most $\abs{\bm{n}}-1$. 
Denote $S_{\bm{n}}=\{j\in\{1,\ldots,r\}:n_j>0\}$, and let   $s_{\bm{n}}$ be the cardinality of $S_{\bm{n}}$.
Solving ~\eqref{eq:recurrence derivation} for a polynomial $c_{\abs{\bm{n}}-1}z^{\abs{\bm{n}}-1}+\dots+c_0$ results in a homogeneous system with coefficient matrix $M_{\bm{n}}'$, equal to the matrix $M_{\bm{n}}$ in \eqref{eq:mopuc matrix} but with $s_{\bm{n}}$ rows removed. Since $\bm{n}$ is normal the row space of $M_{\bm{n}}'$ has dimension $\abs{\bm{n}}-s_{\bm{n}}$, so the null space of $M_{\bm{n}}'$ has dimension $s_{\bm{n}}$. Each $\Phi_{\bm{n}-\bm{e}_j}$ with $j\in S_{\bm{n}}$  solves this system of equations, and we claim they are also linearly independent. To see this, consider the equation
\m{\sum_{j\in S_{\bm{n}}} t_j \Phi_{\bm{n}-\bm{e}_j} = 0}
and suppose $k\in S_{\bm{n}}$ (so that we have $\Phi_{\bm{n}-\bm{e}_k} \neq 0$). Taking the $k$-th inner product with $z^{n_k-1}$ yields
\m{\sum_{j\in S_{\bm{n}}} t_j \jap{\Phi_{\bm{n}-\bm{e}_j},z^{n_k-1}}_k = 0.}
Every term  with $j \neq k$ vanishes by the orthogonality relations \eqref{eq:mopuc eq}, so we end up with
\m{t_k \jap{\Phi_{\bm{n}-\bm{e}_k},z^{n_k-1}}_k = 0.}
The inner product is non-zero by Lemma \ref{lem:normality lemma}, so we must have $t_k = 0$. Hence linear independence follows and $\set{\Phi_{\bm{n}-\bm{e}_j}}_{j\in S_{\bm{n}}}$ is a basis for the space of polynomials of degree at most $\abs{\bm{n}}-1$ with the orthogonality relations \eqref{eq:recurrence derivation}. This then implies
\m{z^{-1}(\Phi_{\bm{n}} - \alpha_{\bm{n}}\Phi_{\bm{n}}^*) = \sum_{j = 1}^r \rho_{\bm{n},j}\Phi_{\bm{n}-\bm{e}_j}}
for some constants $\rho_{\bm{n},1},\dots,\rho_{\bm{n},r}$ (uniquely determined by $\Phi_{\bm{n}-\bm{e}_1},\dots,\Phi_{\bm{n}-\bm{e}_r}$).


\medskip

(ii)   $\Phi_{\bm{n}}^* - \Phi_{\bm{n}-\bm{e}_k}^*$ is $0$ at $z = 0$, so it divisible by $z$. Then
    \m{\jap{z^{-1}(\Phi_{\bm{n}}^* - \Phi_{\bm{n}-\bm{e}_k}^*),z^p}_k & = \jap{\Phi_{\bm{n}}^*,z^{p+1}}_k - \jap{\Phi_{\bm{n}-\bm{e}_k}^*,z^{p+1}}_k \\ & = 0,\hspace{0.3cm} p = 0,\dots,n_k-2,\hspace{0.3cm} j = 1,\dots,r.}
    Similary, if $j \neq k$ we have the orthogonality relations 
    \m{\jap{z^{-1}(\Phi_{\bm{n}}^* - \Phi_{\bm{n}-\bm{e}_k}^*),z^p}_j & = \jap{\Phi_{\bm{n}}^*,z^{p+1}}_j - \jap{\Phi_{\bm{n}-\bm{e}_k}^*,z^{p+1}}_j \\ & = 0,\hspace{0.3cm} p = 0,\dots,n_j-1,\hspace{0.3cm} j = 1,\dots,r.}
    Hence we see that $z^{-1}(\Phi_{\bm{n}}^* - \Phi_{\bm{n}-\bm{e}_k}^*)$ satisfies all the orthogonality conditions in \eqref{eq:mopuc eq} for the index $\bm{n}-\bm{e}_k$, while having degree $\abs{\bm{n}}-1$ or $0$. Hence we get 
    \m{z^{-1}(\Phi_{\bm{n}}^* - \Phi_{\bm{n}-\bm{e}_k}^*) = \beta_{\bm{n},k}\Phi_{\bm{n}-\bm{e}_k}.}
    By comparing $z^{|\bm{n}|-1}$-coefficients, we see that $\beta_{\bm{n},k}$ is the $z^{|\bm{n}|}$-coefficient of $\Phi_{\bm{n}}^*$, hence independent of $k$, and so \eqref{eq:first recurrence} follows.
\end{proof}

One can ask what the analogues of~\eqref{eq:scalar szego 1} and \eqref{eq:scalar szego 4} are. We obtain them in Corollary~\ref{cor:szTypeIIextra2} and Corollary~\ref{cor:szTypeIIextra1}, respectively.
\begin{cor}\label{cor:szTypeIIextra1}
    Assume that all the $\Z_+^r$ multi-indices that appear in the next equation are normal. Then
    \nm{eq:third recurrence}{\Phi_{\bm{n}}^* = \beta_{\bm{n}}\Phi_{\bm{n}} + \sum_{j = 1}^r \rho_{\bm{n},j}\Phi_{\bm{n}-\bm{e}_j}^*.}
\end{cor}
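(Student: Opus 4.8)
The plan is to verify \eqref{eq:third recurrence} directly, using the already-established recurrences \eqref{eq:second recurrence} and \eqref{eq:first recurrence} to supply the coefficients rather than re-deriving them. Concretely, I would set
$$R := \Phi_{\bm{n}}^* - \beta_{\bm{n}}\Phi_{\bm{n}} - \sum_{j=1}^r \rho_{\bm{n},j}\Phi_{\bm{n}-\bm{e}_j}^*,$$
with $\beta_{\bm{n}}$ and $\rho_{\bm{n},j}$ the coefficients coming from \eqref{eq:first recurrence} and \eqref{eq:second recurrence}, and show $R\equiv 0$. The strategy is to prove that $R$ is a type II$^*$ polynomial for the index $\bm{n}$ that vanishes at $0$, and then invoke normality via Lemma~\ref{lem:normality}(ii) to force $R=0$.

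First I would record three elementary identities. Pairing \eqref{eq:second recurrence} with $z^{n_k}$ in the $k$-th inner product (for $k$ with $n_k>0$) annihilates $\jap{\Phi_{\bm{n}}^*,z^{n_k}}_k$ and all cross terms $\jap{\Phi_{\bm{n}-\bm{e}_j},z^{n_k-1}}_k$ with $j\ne k$ by orthogonality, leaving $\jap{\Phi_{\bm{n}},z^{n_k}}_k = \rho_{\bm{n},k}\jap{\Phi_{\bm{n}-\bm{e}_k},z^{n_k-1}}_k$; similarly, pairing \eqref{eq:first recurrence} with $z^{n_k}$ gives $\jap{\Phi_{\bm{n}-\bm{e}_k}^*,z^{n_k}}_k = -\beta_{\bm{n}}\jap{\Phi_{\bm{n}-\bm{e}_k},z^{n_k-1}}_k$. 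Finally, comparing $z^{\abs{\bm{n}}}$-coefficients in \eqref{eq:second recurrence}, and using that $\Phi_{\bm{n}}$ is monic, that $\Phi_{\bm{n}}^*$ has leading coefficient $\beta_{\bm{n}}$, and that each $z\Phi_{\bm{n}-\bm{e}_j}$ (for $j\in S_{\bm{n}}$) is monic of degree $\abs{\bm{n}}$, yields $\sum_{j}\rho_{\bm{n},j} = 1-\alpha_{\bm{n}}\beta_{\bm{n}}$.

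With these in hand the verification is routine. The leading-coefficient computation shows $\deg R\le\abs{\bm{n}}-1$, since the $z^{\abs{\bm{n}}}$-terms of $\Phi_{\bm{n}}^*$ and $\beta_{\bm{n}}\Phi_{\bm{n}}$ cancel while the $\Phi_{\bm{n}-\bm{e}_j}^*$ have degree $\le\abs{\bm{n}}-1$. Evaluating at $z=0$ and using $\Phi_{\bm{n}}^*(0)=\Phi_{\bm{n}-\bm{e}_j}^*(0)=1$, $\Phi_{\bm{n}}(0)=\alpha_{\bm{n}}$, together with $\sum_{j}\rho_{\bm{n},j}=1-\alpha_{\bm{n}}\beta_{\bm{n}}$, gives $R(0)=0$. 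Lastly, for each $k$ with $n_k>0$ and each $p=1,\dots,n_k$ I would check $\jap{R,z^p}_k=0$: for $p\le n_k-1$ every term vanishes by the orthogonality defining $\Phi_{\bm{n}}^*$, $\Phi_{\bm{n}}$, and the $\Phi_{\bm{n}-\bm{e}_j}^*$, whereas for $p=n_k$ only the terms $\beta_{\bm{n}}\jap{\Phi_{\bm{n}},z^{n_k}}_k$ and $\rho_{\bm{n},k}\jap{\Phi_{\bm{n}-\bm{e}_k}^*,z^{n_k}}_k$ survive, and the two scalar identities above show these cancel. Thus $R$ satisfies the type II$^*$ orthogonality conditions for $\bm{n}$ and has $R(0)=0$, so $R=0$ by Lemma~\ref{lem:normality}(ii).

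The main obstacle is really the coefficient identification, namely confirming that the \emph{same} $\rho_{\bm{n},j}$ as in \eqref{eq:second recurrence} are correct here; this is exactly what the $p=n_k$ cancellation encodes. I would stress that the naive alternative of solving \eqref{eq:first recurrence} for $z\Phi_{\bm{n}-\bm{e}_k}$ and substituting into \eqref{eq:second recurrence} breaks down when $\beta_{\bm{n}}=0$, whereas the inner-product argument above is insensitive to whether $\beta_{\bm{n}}$ or any $\rho_{\bm{n},j}$ vanishes, which is why it is the route I would take.
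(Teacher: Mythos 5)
Your proof is correct, but it takes a genuinely different route from the paper. The paper's proof is a two-line algebraic manipulation: multiply \eqref{eq:second recurrence} by $\beta_{\bm{n}}$, substitute $\beta_{\bm{n}}z\Phi_{\bm{n}-\bm{e}_j} = \Phi_{\bm{n}}^* - \Phi_{\bm{n}-\bm{e}_j}^*$ from \eqref{eq:first recurrence}, and invoke the leading-coefficient identity \eqref{eq:sumRhos} (which the paper derives exactly as you do). You instead form the remainder $R$, verify it satisfies the type II$^*$ conditions \eqref{eq:mopuc eq 2} with $R(0)=0$, and kill it with Lemma~\ref{lem:normality}(ii); the key $p=n_k$ cancellation rests on the two pairing identities, the first of which is precisely \eqref{eq:rho inner product formula} used later in the paper. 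Your verification buys an independent confirmation that the \emph{same} coefficients $\rho_{\bm{n},j}$ from \eqref{eq:second recurrence} appear in \eqref{eq:third recurrence}, at the cost of being considerably longer; the paper's substitution buys brevity and gets the coefficient identification for free from the algebra. One correction to your closing remark: the degenerate case $\beta_{\bm{n}}=0$ is a red herring for the paper's method too, since the paper never solves \eqref{eq:first recurrence} for $z\Phi_{\bm{n}-\bm{e}_k}$ (which would require dividing by $\beta_{\bm{n}}$) but multiplies \eqref{eq:second recurrence} by $\beta_{\bm{n}}$ and substitutes, a step valid even when $\beta_{\bm{n}}=0$, in which case \eqref{eq:sumRhos} gives $\sum_{j}\rho_{\bm{n},j}=1$ and the conclusion $\Phi_{\bm{n}}^* = \sum_{j}\rho_{\bm{n},j}\Phi_{\bm{n}-\bm{e}_j}^*$ still follows. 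So the robustness you claim as an advantage of the inner-product argument is shared by the paper's derivation.
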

\begin{proof}
Multiply both sides of \eqref{eq:second recurrence} with $\beta_{\bm{n}}$ and use \eqref{eq:first recurrence} to get
\m{\beta_{\bm{n}}\Phi_{\bm{n}} = \alpha_{\bm{n}}\beta_{\bm{n}}\Phi_{\bm{n}}^* + \sum_{j = 1}^r \rho_{\bm{n},j}(\Phi_{\bm{n}}^* - \Phi_{\bm{n}-\bm{e}_j}^*) = \brkt{\alpha_{\bm{n}}\beta_{\bm{n}} + \sum_{j = 1}^r \rho_{\bm{n},j}}\Phi_{\bm{n}}^* - \sum_{j = 1}^r \rho_{\bm{n},j} \Phi_{\bm{n}-\bm{e}_j}^*,}
and by comparing the  leading coefficients in \eqref{eq:second recurrence} we see that
\begin{equation}\label{eq:sumRhos}
    \alpha_{\bm{n}}\beta_{\bm{n}} + \sum_{j = 1}^r \rho_{\bm{n},j} = 1.
\end{equation}
\end{proof}

\begin{cor}\label{cor:szTypeIIextra2}
Assume that $\bm{n}$, $\bm{n}-\bm{e}_1,\ldots, \bm{n}-\bm{e}_r$ are normal. Define $R_{\bm{n}}$ to be the $r\times r$ matrix $\left(\rho_{\bm{n},k} \right)_{j,k=1}^r$.
\begin{itemize}
    \item[(i)]  Assume that $\beta_{\bm{n}}\ne 0$. Define $A_{\bm{n}}=\tfrac{1}{\beta_{\bm{n}}} (I-R_{\bm{n}})$. Then
    \begin{align}
	\label{eq:szego1rewritten1}
	&
	\begin{pmatrix}
		\Phi_{\bm{n}}-z \Phi_{\bm{n}-\bm{e}_1} 
		\\
		\vdots
		\\
		\Phi_{\bm{n}}-z \Phi_{\bm{n}-\bm{e}_r} 
	\end{pmatrix}
	= 
	A_{\bm{n}} 
	\begin{pmatrix}
		\Phi^*_{\bm{n}-\bm{e}_1} 
		\\
		\vdots
		\\
		 \Phi^*_{\bm{n}-\bm{e}_r} 
	\end{pmatrix}. 
\end{align}
\item[(ii)] Assume that $\alpha_{\bm{n}}\ne 0$. Define $A_{\bm{n}}^{-1}=\tfrac{1}{\alpha_{\bm{n}}} \left((1-\sum_{l=1}^r \rho_{\bm{n},l})I+R_{\bm{n}} \right)$. Then
\begin{align}
\label{eq:szego1rewritten2}
	& A_{\bm{n}}^{-1}\begin{pmatrix}
	\Phi_{\bm{n}}-z \Phi_{\bm{n}-\bm{e}_1} 
	\\
	\vdots
	\\
	\Phi_{\bm{n}}-z \Phi_{\bm{n}-\bm{e}_r} 
\end{pmatrix}
= 
\begin{pmatrix}
	\Phi^*_{\bm{n}-\bm{e}_1} 
	\\
	\vdots
	\\
	\Phi^*_{\bm{n}-\bm{e}_r} 
\end{pmatrix}. 
\end{align}
\end{itemize}
\end{cor}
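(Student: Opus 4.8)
The plan is to prove both identities row by row, reducing everything to the three recurrences already established, namely the type II recurrence~\eqref{eq:second recurrence}, the type II$^*$ recurrence~\eqref{eq:first recurrence}, and its consequence~\eqref{eq:third recurrence}, together with the normalization~\eqref{eq:sumRhos}. Since $\bm{n}-\bm{e}_k$ is assumed normal for every $k$, each component $n_k$ is positive, so all three recurrences are available for every index $k$. Throughout I write $\sigma = \sum_{l=1}^r \rho_{\bm{n},l}$ and use $(R_{\bm{n}})_{k,j} = \rho_{\bm{n},j}$, which encodes that $R_{\bm{n}}$ has identical rows.

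For part (i), I read off the $k$-th row of the claimed identity: since $(I-R_{\bm{n}})_{k,j} = \delta_{kj} - \rho_{\bm{n},j}$, the right-hand side equals $\tfrac{1}{\beta_{\bm{n}}}\big(\Phi^*_{\bm{n}-\bm{e}_k} - \sum_{j=1}^r \rho_{\bm{n},j}\Phi^*_{\bm{n}-\bm{e}_j}\big)$. I then substitute $\sum_{j=1}^r \rho_{\bm{n},j}\Phi^*_{\bm{n}-\bm{e}_j} = \Phi^*_{\bm{n}} - \beta_{\bm{n}}\Phi_{\bm{n}}$ from~\eqref{eq:third recurrence}, and use $\Phi^*_{\bm{n}-\bm{e}_k} - \Phi^*_{\bm{n}} = -\beta_{\bm{n}} z\Phi_{\bm{n}-\bm{e}_k}$ from~\eqref{eq:first recurrence}. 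After dividing by $\beta_{\bm{n}}$ the terms collapse to $\Phi_{\bm{n}} - z\Phi_{\bm{n}-\bm{e}_k}$, which is the $k$-th entry on the left.

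For part (ii), I again take the $k$-th row, obtaining $\tfrac{1}{\alpha_{\bm{n}}}\big[(1-\sigma)(\Phi_{\bm{n}} - z\Phi_{\bm{n}-\bm{e}_k}) + \sum_{j=1}^r \rho_{\bm{n},j}(\Phi_{\bm{n}}-z\Phi_{\bm{n}-\bm{e}_j})\big]$. Replacing $\sum_{j=1}^r \rho_{\bm{n},j} z\Phi_{\bm{n}-\bm{e}_j} = \Phi_{\bm{n}} - \alpha_{\bm{n}}\Phi^*_{\bm{n}}$ via~\eqref{eq:second recurrence} makes the $\Phi_{\bm{n}}$ terms cancel, leaving $\Phi^*_{\bm{n}} - \tfrac{1-\sigma}{\alpha_{\bm{n}}} z\Phi_{\bm{n}-\bm{e}_k}$; since $1-\sigma = \alpha_{\bm{n}}\beta_{\bm{n}}$ by~\eqref{eq:sumRhos}, this is $\Phi^*_{\bm{n}} - \beta_{\bm{n}} z\Phi_{\bm{n}-\bm{e}_k} = \Phi^*_{\bm{n}-\bm{e}_k}$ by~\eqref{eq:first recurrence}, as required. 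Proving (ii) directly, rather than by inverting the matrix from (i), is precisely what allows us to assume only $\alpha_{\bm{n}}\ne 0$.

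Since each step is a short linear manipulation, I expect no serious obstacle; the one point deserving care is justifying the label $A_{\bm{n}}^{-1}$. When both $\alpha_{\bm{n}}$ and $\beta_{\bm{n}}$ are non-zero I can confirm $A_{\bm{n}}A_{\bm{n}}^{-1} = I$ directly: because $R_{\bm{n}}$ has identical rows it is rank one with $R_{\bm{n}}^2 = \sigma R_{\bm{n}}$, so $(I-R_{\bm{n}})\big((1-\sigma)I + R_{\bm{n}}\big) = (1-\sigma)I$, and $1-\sigma = \alpha_{\bm{n}}\beta_{\bm{n}}$ supplies the scalar matching the two prefactors. This both validates the name and shows that (i) and (ii) are genuinely the two directions of a single invertible relation.
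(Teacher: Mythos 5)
Your proof is correct. For part (i) you follow essentially the paper's route: the paper proves \eqref{eq:szego1rewritten1} by plugging \eqref{eq:first recurrence} into \eqref{eq:third recurrence}, which is exactly your row-by-row substitution. For part (ii) you diverge in a useful way. The paper derives the formula for $A_{\bm{n}}^{-1}$ by linear algebra --- computing $\det(I-R_{\bm{n}})=1-\sum_{j}\rho_{\bm{n},j}=\alpha_{\bm{n}}\beta_{\bm{n}}$ and the classical adjoint $(1-\sum_{l}\rho_{\bm{n},l})I+R_{\bm{n}}$ --- i.e.\ it in effect inverts the matrix relation of part (i); read strictly, that argument presupposes $\beta_{\bm{n}}\neq 0$ (otherwise $A_{\bm{n}}$ is undefined and $I-R_{\bm{n}}$ is singular), even though the statement of (ii) assumes only $\alpha_{\bm{n}}\neq 0$. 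Your direct row-by-row verification via \eqref{eq:second recurrence}, \eqref{eq:sumRhos} and \eqref{eq:first recurrence} never divides by $\beta_{\bm{n}}$, so it genuinely covers the case $\beta_{\bm{n}}=0$, where $\sum_{l}\rho_{\bm{n},l}=1$ and $A_{\bm{n}}^{-1}=\tfrac{1}{\alpha_{\bm{n}}}R_{\bm{n}}$; your remark that proving (ii) directly is what permits the weaker hypothesis is exactly right. Your closing computation $A_{\bm{n}}A_{\bm{n}}^{-1}=I$ using $R_{\bm{n}}^2=\sigma R_{\bm{n}}$ reproduces the paper's adjugate calculation and justifies the notation when both coefficients are nonzero. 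In short: the paper's approach explains where the formula for $A_{\bm{n}}^{-1}$ comes from, while yours yields a proof of (ii) valid under precisely the stated assumption.
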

\begin{rem}
    \textup{For $r=1$,  $A_{\bm{n}}$ becomes $(1-(1-|\alpha_n|^2))/\bar\alpha_n = \alpha_n$, so that~\eqref{eq:szego1rewritten1} reduces to~\eqref{eq:scalar szego 1}.}
\end{rem}
\begin{proof}
    For (i), just plug ~\eqref{eq:first recurrence} into~\eqref{eq:third recurrence}. To show (ii), first apply elementary row operations to show that $\det (I-R_{\bm{n}}) = 1-\sum_{j=1}^r \rho_{\bm{n},j} = \alpha_{\bm{n}}\beta_{\bm{n}}$, by~\eqref{eq:sumRhos}. Then apply the same approach to compute the classical adjoint of $I-R_{\bm{n}}$. This produces the formula $A_{\bm{n}}^{-1}=\tfrac{1}{\alpha_{\bm{n}}} \left((1-\sum_{l=1}^r \rho_{\bm{n},l})I+R_{\bm{n}} \right)$.
\end{proof}

\section{Recurrence Relations for Type I Polynomials}\label{ss:typeI}

If an index $\bm{n} \neq \bm{0}$ is normal then $\bm{\Lambda}_{\bm{n}} = (\Lambda_{\bm{n},1},\dots,\Lambda_{\bm{n},r})$ and $\bm{\Lambda}_{\bm{n}}^* = (\Lambda^*_{\bm{n},1},\dots,\Lambda^*_{\bm{n},r})$ will stand for the unique polynomial vectors with the normalizations described in Remark~\ref{rem:a-d} (c) and (d), respectively.  The following simple lemma will be used a few times. It is one example of the so-called biorthogonality property, which can be proved in the exact same way as for MOPRL~\cite[Thm 23.1.6]{Ismail}. 
\begin{lemma}
    Suppose multi-indices $\bm{n}$ and $\bm{n}+\bm{e}_k$ are normal. Then
    \begin{align}\label{eq:biorth}
        \jap{\Phi_{\bm{n}},\Lambda_{\bm{n}+\bm{e}_k,k}}_k = \sum_{m = 1}^{r}\jap{\Phi_{\bm{n}},\Lambda_{\bm{n}+\bm{e}_k,m}}_m & = 1.
    \end{align}
\end{lemma}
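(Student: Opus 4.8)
The plan is to treat the two equalities separately, and neither requires anything beyond the definitions together with the normalizations fixed by normality. First I would establish the left equality $\jap{\Phi_{\bm{n}},\Lambda_{\bm{n}+\bm{e}_k,k}}_k = \sum_{m=1}^r \jap{\Phi_{\bm{n}},\Lambda_{\bm{n}+\bm{e}_k,m}}_m$ by showing that every term with $m\ne k$ vanishes. Indeed, from the definition of a type I polynomial at the index $\bm{n}+\bm{e}_k$, the component $\Lambda_{\bm{n}+\bm{e}_k,m}$ has degree at most $(\bm{n}+\bm{e}_k)_m - 1 = n_m-1$ whenever $m\ne k$. Expanding such a $\Lambda_{\bm{n}+\bm{e}_k,m}$ as a combination of $1,z,\ldots,z^{n_m-1}$ and using conjugate-linearity of $\jap{\cdot,\cdot}_m$ in its second slot, each $\jap{\Phi_{\bm{n}},\Lambda_{\bm{n}+\bm{e}_k,m}}_m$ becomes a combination of the quantities $\jap{\Phi_{\bm{n}},z^p}_m$ with $p=0,\ldots,n_m-1$, all of which vanish by the type II orthogonality relations \eqref{eq:mopuc eq} for $\Phi_{\bm{n}}$. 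Hence only the $m=k$ term survives, which is precisely the first equality.

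For the second equality I would expand the monic polynomial $\Phi_{\bm{n}} = z^{|\bm{n}|} + \sum_{l=0}^{|\bm{n}|-1} c_l z^l$, where the fact that $\deg\Phi_{\bm{n}} = |\bm{n}|$ (and hence the leading coefficient is genuinely $1$) uses normality of $\bm{n}$ via Lemma~\ref{lem:normality}(i). Computing the full sum by linearity and invoking the identity $\jap{z^l,\Lambda_{\bm{n}+\bm{e}_k,m}}_m = \overline{\jap{\Lambda_{\bm{n}+\bm{e}_k,m},z^l}_m}$ gives
\[
\sum_{m=1}^r \jap{\Phi_{\bm{n}},\Lambda_{\bm{n}+\bm{e}_k,m}}_m = \sum_{l=0}^{|\bm{n}|} c_l\,\overline{\sum_{m=1}^r \jap{\Lambda_{\bm{n}+\bm{e}_k,m},z^l}_m},
\]
with $c_{|\bm{n}|}=1$. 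The inner sum over $m$ is now governed by the type I relations \eqref{eq:mopuc eq type 1} at the index $\bm{n}+\bm{e}_k$: it vanishes for $l=0,\ldots,|\bm{n}+\bm{e}_k|-2 = |\bm{n}|-1$, and equals $1$ for $l = |\bm{n}+\bm{e}_k|-1 = |\bm{n}|$ by the normalization of Remark~\ref{rem:a-d}(c), which is available precisely because $\bm{n}+\bm{e}_k$ is normal. Consequently only the top term $l=|\bm{n}|$ contributes, yielding $c_{|\bm{n}|}\cdot 1 = 1$.

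The computation is essentially bookkeeping, and I expect the only genuine care to lie in tracking which slot of $\jap{\cdot,\cdot}_j$ is conjugated: the type II conditions place $\Phi_{\bm{n}}$ in the first slot, whereas the type I conditions place $\Lambda_{\bm{n}+\bm{e}_k,m}$ in the first slot, and the conjugation identity above is exactly what reconciles the two. The second point worth keeping explicit is where the two normality hypotheses enter — normality of $\bm{n}$ makes $\Phi_{\bm{n}}$ monic of degree $|\bm{n}|$, while normality of $\bm{n}+\bm{e}_k$ guarantees that $\bm{\Lambda}_{\bm{n}+\bm{e}_k}$ exists, is unique, and carries the normalization in (c). No deeper obstacle arises; this is the unit-circle transcription of the standard MOPRL biorthogonality argument.
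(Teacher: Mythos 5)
Your proof is correct and takes essentially the same route as the paper: the first equality via the degree bound $\deg\Lambda_{\bm{n}+\bm{e}_k,m}\le n_m-1$ combined with the type II orthogonality of $\Phi_{\bm{n}}$, and the second by conjugating so that the type I relations at $\bm{n}+\bm{e}_k$ annihilate everything in $\Phi_{\bm{n}}$ below the leading term, leaving the normalization of Remark~\ref{rem:a-d}(c) to produce the value $1$. The paper writes the second step more compactly—replacing $\Phi_{\bm{n}}$ by $z^{\abs{\bm{n}}}$ in one stroke rather than expanding in monomials—but the argument is the same.
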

\begin{proof}
For  $m\ne k$ we have $\deg \Lambda_{\bm{n}+\bm{e}_k,m} \le n_m-1$, so that $\jap{\Phi_{\bm{n}},\Lambda_{\bm{n}+\bm{e}_k,m}}_m = 0$. This proves the first equality, and for the second equality, 
\m{\sum_{m = 1}^{r}\jap{\Phi_{\bm{n}},\Lambda_{\bm{n}+\bm{e}_k,m}}_m
= \overline{\sum_{m = 1}^{r}\jap{\Lambda_{\bm{n}+\bm{e}_k,m},\Phi_{\bm{n}}}_m} =
\overline{\sum_{m = 1}^{r}\jap{\Lambda_{\bm{n}+\bm{e}_k,m},z^{\abs{\bm{n}}}}_m}
= 1.}
\end{proof}
\begin{rem}
    \textup{Denote $\kappa_{\bm{n},k}$ to be the leading $z^{n_k-1}$-coefficient of $\Lambda_{\bm{n},k}$. Then 
    \eqref{eq:biorth} implies 
    \begin{equation}\label{eq:normCoeff}
        \bar\kappa_{\bm{n}+\bm{e}_k,k}=
        \frac{1}{\jap{\Phi_{\bm{n}}, z^{n_k}}_k}.
    \end{equation}
    Note that $\jap{\Phi_{\bm{n}}, z^{n_k}}_k \neq 0 \neq \kappa_{\bm{n}+\bm{e}_k,k}$, which can also be seen from Lemma \ref{lem:normality lemma}.}
\end{rem}

The following result is the type I analogue of Theorem \ref{thm:type 2 szego}. From these one can easily deduce the analogue of the remaining two Szeg\H{o} recurrences, similarly to Corollaries~\ref{cor:szTypeIIextra1} and~\ref{cor:szTypeIIextra2}. 



\begin{thm}\leavevmode
\begin{itemize}
\item[(i)]
    If $\bm{n}$ is normal, along with all neighbouring indices $\bm{n}\pm\bm{e}_1,\dots,\bm{n}\pm\bm{e}_r$ that belong to $\Z_+^r$, then    \nm{eq:type 1 second recurrence}{z\bm{{\Lambda}}_{\bm{n}} = -\bar\beta_{\bm{n}}\bm{\Lambda}^*_{\bm{n}} + \sum_{j = 1}^{r}\bar\rho_{\bm{n},j}\bm{\Lambda}_{\bm{n}+\bm{e}_j}.}
    \item[(ii)] If $\bm{n}$ and $\bm{n}+\bm{e}_k$ are normal, then
    \nm{eq:type 1 first recurrence}{\bm{\Lambda}^*_{\bm{n}} = \bm{\Lambda}^*_{\bm{n}+\bm{e}_k} -\bar\alpha_{\bm{n}}\bm{\Lambda}_{\bm{n}+\bm{e}_k}.}
\end{itemize}
\end{thm}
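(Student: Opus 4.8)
The plan is to prove both parts by a ``candidate-and-uniqueness'' argument, exploiting that for normal $\bm{n}$ the type I and type I$^*$ vectors are unique after the normalizations of Remark~\ref{rem:a-d}(c),(d), and that all defining conditions are linear. Throughout I use the pairing $\sum_m\jap{\cdot_m,z^p}_m$, which is linear in its polynomial-vector argument.

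For (ii) I would verify directly that $\bm{V}:=\bm{\Lambda}^*_{\bm{n}+\bm{e}_k}-\bar\alpha_{\bm{n}}\bm{\Lambda}_{\bm{n}+\bm{e}_k}$ \emph{is} the type I$^*$ vector at $\bm{n}$. Expanding $\sum_m\jap{V_m,z^p}_m$ and invoking~\eqref{eq:mopuc eq 2 type 1} at $\bm{n}+\bm{e}_k$ for the first summand and~\eqref{eq:mopuc eq type 1} at $\bm{n}+\bm{e}_k$ for the second, all conditions $p=1,\dots,\abs{\bm{n}}-1$ hold, while $p=0$ yields the normalization $\sum_m\jap{V_m,1}_m=1$. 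The only delicate point is the degree bound: for $j\ne k$ it is automatic, and for $j=k$ one must show the $z^{n_k}$-coefficients cancel, i.e. $[z^{n_k}]\Lambda^*_{\bm{n}+\bm{e}_k,k}=\bar\alpha_{\bm{n}}\kappa_{\bm{n}+\bm{e}_k,k}$. This comes from the biorthogonality identity $\sum_m\jap{\Phi_{\bm{n}},\Lambda^*_{\bm{n}+\bm{e}_k,m}}_m=\alpha_{\bm{n}}$ (expand $\Phi_{\bm{n}}$ and use that its constant term is $\alpha_{\bm{n}}$), which collapses by orthogonality to $\overline{[z^{n_k}]\Lambda^*_{\bm{n}+\bm{e}_k,k}}\,\jap{\Phi_{\bm{n}},z^{n_k}}_k=\alpha_{\bm{n}}$; together with~\eqref{eq:normCoeff} this gives the claim. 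Uniqueness then forces $\bm{V}=\bm{\Lambda}^*_{\bm{n}}$, and this argument is valid for every $\alpha_{\bm{n}}$.

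For (i) I would set $\bm{D}:=z\bm{\Lambda}_{\bm{n}}+\bar\beta_{\bm{n}}\bm{\Lambda}^*_{\bm{n}}-\sum_{j}\bar\rho_{\bm{n},j}\bm{\Lambda}_{\bm{n}+\bm{e}_j}$ and aim to show $\bm{D}=\bm{0}$. Each summand satisfies $\sum_m\jap{\cdot_m,z^p}_m=0$ for $p=1,\dots,\abs{\bm{n}}-1$ (for $z\bm{\Lambda}_{\bm{n}}$ this is the shifted type I relation $\sum_m\jap{\Lambda_{\bm{n},m},z^{p-1}}_m=0$), so $\bm{D}$ does too. Next, $[z^{n_i}]D_i=\kappa_{\bm{n},i}-\bar\rho_{\bm{n},i}\kappa_{\bm{n}+\bm{e}_i,i}$, which vanishes precisely because $\bar\rho_{\bm{n},i}=\kappa_{\bm{n},i}/\kappa_{\bm{n}+\bm{e}_i,i}$; I obtain this by taking the $i$-th inner product of~\eqref{eq:second recurrence} with $z^{n_i}$, giving $\rho_{\bm{n},i}=\jap{\Phi_{\bm{n}},z^{n_i}}_i/\jap{\Phi_{\bm{n}-\bm{e}_i},z^{n_i-1}}_i$, and then applying~\eqref{eq:normCoeff}. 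Hence $\deg D_i\le n_i-1$ for all $i$, so $\bm{D}$ is a type I$^*$ vector at $\bm{n}$; by normality (Lemma~\ref{lem:normality}(iv)) it is a scalar multiple $\lambda\bm{\Lambda}^*_{\bm{n}}$.

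It remains to show $\lambda=0$, and this is the main obstacle. Pairing $\bm{D}$ against $z^{\abs{\bm{n}}}$ gives $\sum_m\jap{D_m,z^{\abs{\bm{n}}}}_m=1+\bar\beta_{\bm{n}}\tau-\sum_j\bar\rho_{\bm{n},j}$, where $\tau:=\sum_m\jap{\Lambda^*_{\bm{n},m},z^{\abs{\bm{n}}}}_m=-\bar\alpha_{\bm{n}}$ (read off from $\sum_m\jap{\Phi_{\bm{n}},\Lambda^*_{\bm{n},m}}_m=0$, which holds termwise); using $\sum_j\rho_{\bm{n},j}=1-\alpha_{\bm{n}}\beta_{\bm{n}}$ from~\eqref{eq:sumRhos}, the expression equals $1-\bar\alpha_{\bm{n}}\bar\beta_{\bm{n}}-(1-\bar\alpha_{\bm{n}}\bar\beta_{\bm{n}})=0$. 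But it also equals $\lambda\tau=-\lambda\bar\alpha_{\bm{n}}$, so $\lambda=0$ \emph{whenever} $\alpha_{\bm{n}}\ne0$. The genuine difficulty is the exceptional locus $\alpha_{\bm{n}}=0$: there $\tau=0$, so $\bm{\Lambda}^*_{\bm{n}}$ is annihilated by this pairing — indeed by every pairing against a type II polynomial, each of which detects $\lambda$ only through the factor $\alpha_{\bm{n}}$ — and the quantity one is really forced to control is $\lambda=\sum_m\jap{z\Lambda_{\bm{n},m},1}_m+\bar\beta_{\bm{n}}$, a moment one step outside the orthogonality range that is not fixed by the linear conditions alone. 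I expect to close this case by continuity: on the open set where the relevant indices are normal all the objects above are rational functions of the moments of $\bm{\mu}$, the set $\set{\alpha_{\bm{n}}=0}$ is a proper subvariety, and the identity $\lambda=0$ already holds on its dense complement. (As a sanity check, for $r=1$ the relation $\lambda=0$ is exactly the scalar Szeg\H{o} identity $z\Phi_{n-1}=\Phi_n-\alpha_n\Phi^*_{n-1}$.)
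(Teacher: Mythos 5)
Your part (ii) is correct and unconditional, and it is essentially the paper's own argument rearranged: the paper forms the difference $\bm\Lambda^*_{\bm{n}+\bm{e}_k}-\bm\Lambda^*_{\bm{n}}$, notes it lies in the one-dimensional type I solution space at $\bm{n}+\bm{e}_k$, and evaluates the constant by the same pairing with $z^{\abs{\bm{n}}}$ that underlies your biorthogonality identity $\sum_m\jap{\Phi_{\bm{n}},\Lambda^*_{\bm{n}+\bm{e}_k,m}}_m=\alpha_{\bm{n}}$; your verification of the candidate, including the leading-coefficient cancellation $[z^{n_k}]\Lambda^*_{\bm{n}+\bm{e}_k,k}=\bar\alpha_{\bm{n}}\kappa_{\bm{n}+\bm{e}_k,k}$ via \eqref{eq:normCoeff}, is sound for every $\alpha_{\bm{n}}$. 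In part (i), your identification $\bar\rho_{\bm{n},i}=\kappa_{\bm{n},i}/\kappa_{\bm{n}+\bm{e}_i,i}$ is exactly the paper's combination of \eqref{eq:rho inner product formula} with \eqref{eq:normCoeff}, and your reduction of everything to a single scalar $\lambda$ via the one-dimensional type I$^*$ space at $\bm{n}$ is a legitimate variant of the paper's expansion in the $r$-dimensional basis $\set{\bm\Lambda_{\bm{n}+\bm{e}_j}}$.

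The genuine gap is where you yourself flag it: to kill $\lambda$ you test against $z^{\abs{\bm{n}}}$, a functional against which $\bm\Lambda^*_{\bm{n}}$ pairs to $-\bar\alpha_{\bm{n}}$, so the test is blind precisely on the locus $\alpha_{\bm{n}}=0$, and the continuity patch is not a proof as written. It would require knowing that $\set{\alpha_{\bm{n}}=0}$ is nowhere dense in the normal locus --- i.e.\ exhibiting, for every $\bm{n}$ and $r$, at least one normal system with $\alpha_{\bm{n}}\neq 0$ (the paper itself stresses that proven normal MOPUC examples are rare), plus an argument that the truncated moment data of genuine systems of measures fill an open connected set (e.g.\ via the Verblunsky parametrization) so that real-analytic vanishing cannot occupy a component; none of this is supplied. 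The missing idea that removes the exceptional case entirely is to test against $z^0$ instead: the pairing of $\bm\Lambda^*_{\bm{n}}$ with $1$ is the normalization $1\neq 0$, never degenerate, so $\lambda=\bar\beta_{\bm{n}}+\delta_{\bm{n}}$ with $\delta_{\bm{n}}=\sum_m\jap{z\Lambda_{\bm{n},m},1}_m$ --- the moment you declared ``not fixed by the linear conditions alone.'' It is in fact computable, and this is the crux of the paper's proof: write $1=\Phi^*_{\bm{n}}-\beta_{\bm{n}}z^{\abs{\bm{n}}}-(\text{combination of } z,\dots,z^{\abs{\bm{n}}-1})$; the middle powers are annihilated by the shifted type I conditions, the term $\sum_m\jap{z\Lambda_{\bm{n},m},\Phi^*_{\bm{n}}}_m$ vanishes termwise by \eqref{eq:mopuc eq 2} since $z\Lambda_{\bm{n},m}\in\operatorname{span}(z,\dots,z^{n_m})$, and what survives is $-\bar\beta_{\bm{n}}\sum_m\jap{\Lambda_{\bm{n},m},z^{\abs{\bm{n}}-1}}_m=-\bar\beta_{\bm{n}}$, whence $\lambda=0$ unconditionally. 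In short: the type II$^*$ polynomial is exactly the tool that evaluates the out-of-range moment, and replacing your $z^{\abs{\bm{n}}}$-pairing by this $z^0$-pairing turns your proposal into a complete proof without any density argument.
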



\begin{proof}
(i) Choose $\delta_{\bm{n}}$ such that
\m{\sum_{m = 1}^{r}\jap{z\Lambda_{\bm{n},m} - \delta_{\bm{n}}\Lambda^*_{\bm{n},m}, z^p}_m = 0,\qquad p = 0,1,\dots,\abs{\bm{n}}-1.}
The orthogonality relations above hold for every choice of $\delta_{\bm{n}}$ except in the case $p = 0$.
In other words, we make the choice
\m{\delta_{\bm{n}} = \sum_{m = 1}^{r}\jap{z\Lambda_{\bm{n},m},1}_m.}
Since $\Phi_{\bm{n}}^* = \beta_{\bm{n}}z^{\abs{\bm{n}}} + \dots + 1$ we get
\m{\delta_{\bm{n}} & = \sum_{m = 1}^{r}\jap{z\Lambda_{\bm{n},m},\Phi_{\bm{n}}^*}_m-\sum_{m = 1}^{r}\jap{z\Lambda_{\bm{n},m},\beta_{\bm{n}}z^{\abs{\bm{n}}}}_m \\ & = \sum_{m = 1}^{r}\overline{\jap{\Phi_{\bm{n}}^*,z\Lambda_{\bm{n},m}}}_m-\bar\beta_{\bm{n}}\sum_{m = 1}^{r}\jap{\Lambda_{\bm{n},m},z^{\abs{\bm{n}}-1}}_m  = -\bar\beta_{\bm{n}}.}
Consider the set of vectors of polynomials $(\Xi_1,\dots,\Xi_r)$ with $\deg{\Xi_j} \leq n_j$ and
\m{\sum_{m = 1}^{r}\jap{\Xi_m, z^p}_m = 0,\hspace{0.5cm} p = 0,1,\dots,\abs{\bm{n}}-1.}
Rewriting this as the system of equations for the coefficients of these polynomials results in a homogeneous linear system with coefficient matrix equal to $M_{\bm{n}}^T$ but with $r$ columns added. Hence the null space of this matrix has dimension $r$. Note that $(\Lambda_{\bm{n}+\bm{e}_j,1},\dots,\Lambda_{\bm{n}+\bm{e}_j,r})$ is a solution for each $j = 1,\dots,r$. Moreover, these vectors are linearly independent. To see this, suppose
\m{\sum_{j = 1}^{r}c_j(\Lambda_{\bm{n}+\bm{e}_j,1},\dots,\Lambda_{\bm{n}+\bm{e}_j,r}) = 0.}
By Lemma \ref{lem:normality lemma}, $\deg{\Lambda_{\bm{n}+\bm{e}_m,m}} = n_m$, but $\deg{\Lambda_{\bm{n}+\bm{e}_j,m}} = n_m - 1$ if $j \neq m$, so we must have $c_m = 0$. Hence the vectors of the form $(\Lambda_{\bm{n}+\bm{e}_j,1},\dots,\Lambda_{\bm{n}+\bm{e}_j,r})$ form a basis of the solution space of our linear system, and therefore
\m{z\bm{\Lambda}_{\bm{n}} + \bar\beta_{\bm{n}}\bm{\Lambda}^*_{\bm{n}} = \sum_{j = 1}^{r}\sigma_{\bm{n},j}\bm{\Lambda}_{\bm{n}+\bm{e}_j}}
for some complex numbers $\sigma_{\bm{n},1},\dots,\sigma_{\bm{n},r}$. By comparing the leading coefficients in the above recurrence relation, we see that
\nm{eq:kappa leading coefficient formula}{\kappa_{\bm{n},k} = \sigma_{\bm{n},k}\kappa_{\bm{n}+\bm{e}_k,k},}
where $\kappa_{\bm{n},j}$ was the  $z^{n_j-1}$-coefficient of $\Lambda_{\bm{n},j}$. On the other hand, by taking the $k$-th inner product with respect to $z^{n_k}$ on both sides of \eqref{eq:second recurrence} we see that
\nm{eq:rho inner product formula}{\jap{\Phi_{\bm{n}},z^{n_k}}_k = \rho_{\bm{n},k}\jap{\Phi_{\bm{n}-\bm{e}_k},z^{n_k-1}}_k.}
By combining these two relations we get
\m{\bar\sigma_{\bm{n},k}\bar\kappa_{\bm{n}+\bm{e}_k,k}\jap{\Phi_{\bm{n}},z^{n_k}}_k = \rho_{\bm{n},k}\bar\kappa_{\bm{n},k}\jap{\Phi_{\bm{n}-\bm{e}_k},z^{n_k-1}}_k.}
If $\bm{n}+\bm{e}_k$ is normal then the left hand side is equal to $\bar\sigma_{\bm{n},k}$, by \eqref{eq:normCoeff}. If $n_k = 0$ then the right hand side vanishes and then $\sigma_{\bm{n},k} = 0 = \bar\rho_{\bm{n},k}$, and if $\bm{n}-\bm{e}_k$ is normal then the right hand side is equal to $\rho_{\bm{n},k}$ by \eqref{eq:normCoeff}, so $\sigma_{\bm{n},k} = \bar\rho_{\bm{n},k}$.

\medskip

(ii) We have the orthogonality relations
\m{\sum_{m = 1}^{r}\jap{\Lambda^*_{\bm{n}+\bm{e}_k,m}-\Lambda^*_{\bm{n},m}, z^p}_m = 0,\hspace{0.5cm} p = 0,1,\dots,\abs{\bm{n}}-1.}
Since $\deg{(\Lambda^*_{\bm{n}+\bm{e}_k,m}-\Lambda^*_{\bm{n},m})} \leq n_m - 1$ if $m \neq k$ and $\deg{(\Lambda^*_{\bm{n}+\bm{e}_k,k}-\Lambda^*_{\bm{n},k})} \leq n_k$ there must be a constant $\epsilon_{\bm{n},k}$ such that
\m{\bm\Lambda^*_{\bm{n}+\bm{e}_k}-\bm\Lambda^*_{\bm{n}} = \epsilon_{\bm{n},k}\bm\Lambda_{\bm{n}+\bm{e}_k}.}
Now taking inner products with $z^{\abs{\bm{n}}}$ yields
\m{\epsilon_{\bm{n},k} = -\sum_{m = 1}^r \jap{\Lambda^*_{\bm{n},m}, z^{\abs{\bm{n}}}}_m.}
Since $\Phi_{\bm{n}} = z^{\abs{\bm{n}}} + \dots + \alpha_{\bm{n}}$ we get
\m{\epsilon_{\bm{n},k} & = -\brkt{\sum_{m = 1}^r \jap{\Lambda^*_{\bm{n},m}, \Phi_{\bm{n}}}_m - \sum_{m = 1}^r \jap{\Lambda^*_{\bm{n},m},\alpha_{\bm{n}}}_m} \\ & = -\sum_{m = 1}^r \overline{\jap{\Phi_{\bm{n}}, \Lambda^*_{\bm{n},m}}}_m + \bar\alpha_{\bm{n}}\sum_{m = 1}^{r}\jap{\Lambda^*_{\bm{n},m}, 1}_m = \bar\alpha_{\bm{n}}.}
\end{proof}

\section{Compatibility Conditions}\label{ss:compatibility}

Proposition \ref{pr:compPoly1} is identical to an important and peculiar feature of MOPRL, that does not appear in OPRL. However for MOPUC, this structure is richer with the presence of both $\Phi_{\bm{n}}$ and $\Phi^*_{\bm{n}}$, as Proposition \ref{pr:compPoly2} shows.

\begin{prop}\label{pr:compPoly1}
If $\bm{n}+\bm{e}_k$, $\bm{n}+\bm{e}_l$, and $\bm{n}$ are normal, and $k \neq l$, then there is a complex number $\gamma_{\bm{n}}^{kl}$ such that
\nm{eq:compPoly1}{\Phi_{\bm{n}+\bm{e}_k}-\Phi_{\bm{n}+\bm{e}_l} = \gamma_{\bm{n}}^{kl}\Phi_{\bm{n}}.}
\end{prop}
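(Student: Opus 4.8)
The plan is to show that the difference $D := \Phi_{\bm{n}+\bm{e}_k} - \Phi_{\bm{n}+\bm{e}_l}$ is itself a type II polynomial for the index $\bm{n}$, and then to invoke normality of $\bm{n}$ to identify it with a scalar multiple of $\Phi_{\bm{n}}$. First I would record the degree bound: since $\bm{n}+\bm{e}_k$ and $\bm{n}+\bm{e}_l$ are normal, both $\Phi_{\bm{n}+\bm{e}_k}$ and $\Phi_{\bm{n}+\bm{e}_l}$ are the unique \emph{monic} type II polynomials of degree $\abs{\bm{n}}+1$ (Lemma~\ref{lem:normality} together with Remark~\ref{rem:a-d}(a)). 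Their leading terms therefore cancel, so $\deg D \leq \abs{\bm{n}}$.

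The heart of the argument is a bookkeeping check that $D$ satisfies the orthogonality relations \eqref{eq:mopuc eq} for the index $\bm{n}$, namely $\jap{D,z^p}_j = 0$ for $p = 0,\dots,n_j-1$ and every $j$. I would split into three cases. For $j \notin \set{k,l}$, both $\Phi_{\bm{n}+\bm{e}_k}$ and $\Phi_{\bm{n}+\bm{e}_l}$ are already orthogonal to $z^p$ in $\jap{\cdot,\cdot}_j$ for $p = 0,\dots,n_j-1$, hence so is their difference. For $j = k$, the polynomial $\Phi_{\bm{n}+\bm{e}_k}$ is orthogonal to $z^0,\dots,z^{n_k}$ in $\jap{\cdot,\cdot}_k$, while $\Phi_{\bm{n}+\bm{e}_l}$ (whose $k$-th index entry is still $n_k$, since $k\neq l$) is orthogonal to $z^0,\dots,z^{n_k-1}$; the common range $p = 0,\dots,n_k-1$ is exactly what the index $\bm{n}$ demands. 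The case $j = l$ is symmetric. The point worth emphasizing is that the two ``top'' conditions $\jap{\Phi_{\bm{n}+\bm{e}_k},z^{n_k}}_k = 0$ and $\jap{\Phi_{\bm{n}+\bm{e}_l},z^{n_l}}_l = 0$ are \emph{not} inherited by $D$, but they are also \emph{not} needed: only the weaker relations for $\bm{n}$ survive the subtraction.

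Finally I would use normality of $\bm{n}$ to close the argument. The space of polynomials of degree $\leq \abs{\bm{n}}$ satisfying \eqref{eq:mopuc eq} is one-dimensional: by Lemma~\ref{lem:normality}(i) there is no nonzero such polynomial of degree strictly less than $\abs{\bm{n}}$, so any solution is determined by its $z^{\abs{\bm{n}}}$-coefficient and is therefore a scalar multiple of the monic solution $\Phi_{\bm{n}}$. Since $D$ lies in this space, we conclude $D = \gamma_{\bm{n}}^{kl}\Phi_{\bm{n}}$ for a unique scalar $\gamma_{\bm{n}}^{kl}$ (possibly $0$, which happens precisely when $\Phi_{\bm{n}+\bm{e}_k} = \Phi_{\bm{n}+\bm{e}_l}$).

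I do not expect a genuine obstacle here; the only step requiring care is the case analysis verifying the orthogonality of $D$, where one must keep track of exactly which inner-product ranges are shared by the two polynomials. Everything else follows immediately from the uniqueness encoded in normality, exactly as in the MOPRL analogue.
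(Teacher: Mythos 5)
Your proof is correct and follows essentially the same route as the paper's: the paper also observes that the difference has degree at most $\abs{\bm{n}}$ and satisfies the orthogonality relations at $\bm{n}$, then concludes by normality of $\bm{n}$. The only difference is that the paper compresses this into two sentences, while you spell out the case analysis for the orthogonality check and the dimension-counting step that normality provides.
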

\begin{prop}\label{pr:compPoly2}
    Assume 
    that all the $\Z_+^r$ multi-indices that appear in the corresponding equations are 
    normal. Then 
    \begin{align}
        \label{eq:compPoly3} & 
        \Phi^*_{\bm{n} + \bm{e}_k} - \Phi^*_{\bm{n} + \bm{e}_l} 
        = \beta_{\bm{n} + \bm{e}_k + \bm{e}_l} z (\Phi_{\bm{n} + \bm{e}_l}-\Phi_{\bm{n} + \bm{e}_k} ),
		\\
  \label{eq:compPoly2}  &\Phi^*_{\bm{n} + \bm{e}_k} - \Phi^*_{\bm{n} +\bm{e}_l} =(\beta_{\bm{n}+\bm{e}_k}-\beta_{\bm{n}+\bm{e}_l} ) z \Phi_{\bm{n}}, \\
  \label{eq:compPoly4} &  \beta_{\bm{n}+\bm{e}_k}  (\Phi^*_{\bm{n} + \bm{e}_l} -  \Phi^*_{\bm{n}}) =  \beta_{\bm{n}+\bm{e}_l}  (\Phi^*_{\bm{n} + \bm{e}_k} -  \Phi^*_{\bm{n}} ).
    \end{align}
\end{prop}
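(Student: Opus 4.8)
The plan is to obtain all three identities as immediate consequences of the type~II$^*$ recurrence \eqref{eq:first recurrence}, using the key structural fact from Theorem~\ref{thm:type 2 szego}(ii) that the coefficient $\beta_{\bm{m}}$ is \emph{independent} of the direction along which the recurrence is run. Throughout I assume $k\neq l$ (when $k=l$ every identity is trivially $0=0$), and I note that each application of \eqref{eq:first recurrence} below decreases an index whose relevant component has the strictly positive form $n_\bullet+1$, so the hypothesis $n_k>0$ of Theorem~\ref{thm:type 2 szego}(ii) is always met; the normality assumption of the proposition guarantees that the two indices involved in each single application are normal.

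For \eqref{eq:compPoly2}, I apply \eqref{eq:first recurrence} to $\Phi^*_{\bm{n}+\bm{e}_k}$ in direction $k$ and to $\Phi^*_{\bm{n}+\bm{e}_l}$ in direction $l$, obtaining the two relations $\Phi^*_{\bm{n}+\bm{e}_k}=\Phi^*_{\bm{n}}+\beta_{\bm{n}+\bm{e}_k}z\Phi_{\bm{n}}$ and $\Phi^*_{\bm{n}+\bm{e}_l}=\Phi^*_{\bm{n}}+\beta_{\bm{n}+\bm{e}_l}z\Phi_{\bm{n}}$. Subtracting them cancels the common $\Phi^*_{\bm{n}}$ and yields \eqref{eq:compPoly2} at once. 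The same two relations give \eqref{eq:compPoly4}: multiplying the first by $\beta_{\bm{n}+\bm{e}_l}$ and the second by $\beta_{\bm{n}+\bm{e}_k}$, both right-hand sides equal $\beta_{\bm{n}+\bm{e}_k}\beta_{\bm{n}+\bm{e}_l}z\Phi_{\bm{n}}$, so the left-hand sides coincide, which is exactly \eqref{eq:compPoly4}.

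For \eqref{eq:compPoly3}, I instead decrease $\Phi^*_{\bm{n}+\bm{e}_k+\bm{e}_l}$ along the two different directions $k$ and $l$. Here direction-independence of $\beta$ is essential: both applications use the \emph{same} coefficient $\beta_{\bm{n}+\bm{e}_k+\bm{e}_l}$, producing $\Phi^*_{\bm{n}+\bm{e}_k+\bm{e}_l}=\Phi^*_{\bm{n}+\bm{e}_l}+\beta_{\bm{n}+\bm{e}_k+\bm{e}_l}z\Phi_{\bm{n}+\bm{e}_l}$ and $\Phi^*_{\bm{n}+\bm{e}_k+\bm{e}_l}=\Phi^*_{\bm{n}+\bm{e}_k}+\beta_{\bm{n}+\bm{e}_k+\bm{e}_l}z\Phi_{\bm{n}+\bm{e}_k}$. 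Subtracting eliminates $\Phi^*_{\bm{n}+\bm{e}_k+\bm{e}_l}$ and rearranges to \eqref{eq:compPoly3}.

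Since the argument reduces to substituting \eqref{eq:first recurrence} and performing elementary algebra, there is no serious analytic obstacle; the only thing to be careful about is the bookkeeping of hypotheses — verifying, for each of the three displays, that precisely the indices named in the proposition are the ones needed and that each decrement keeps us in $\Z_+^r$. The conceptual crux, already supplied by Theorem~\ref{thm:type 2 szego}(ii), is that $\beta$ does not depend on the chosen direction; without it the two decompositions used for \eqref{eq:compPoly3} would carry different coefficients and the telescoping would fail.
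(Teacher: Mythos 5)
Your proof is correct and follows essentially the same route as the paper's own argument: each identity comes from applying the recurrence \eqref{eq:first recurrence} twice — from $\bm{n}+\bm{e}_k+\bm{e}_l$ in the two directions for \eqref{eq:compPoly3}, and from $\bm{n}+\bm{e}_k$, $\bm{n}+\bm{e}_l$ down to $\bm{n}$ for \eqref{eq:compPoly2} and \eqref{eq:compPoly4} — and then eliminating the common term, with the direction-independence of $\beta$ doing the real work. Your bookkeeping is in fact slightly cleaner than the paper's, whose displayed equations for \eqref{eq:compPoly3} and whose multipliers for \eqref{eq:compPoly4} contain small typos that your version implicitly corrects.
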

\begin{proof}[Proof of Propositions~\ref{pr:compPoly1} and~\ref{pr:compPoly2}] 
Assuming $\bm{n}+\bm{e}_k$ and $\bm{n}+\bm{e}_l$ are normal, then $\Phi_{\bm{n}+\bm{e}_k}-\Phi_{\bm{n}+\bm{e}_l}$ has degree $\leq \abs{\bm{n}}$ and satisfies all the orthogonality relations at $\bm{n}$. This shows~\eqref{eq:compPoly1}. 


Assuming normality of $\bm{n}+\bm{e}_k$, $\bm{n}+\bm{e}_l$ and $\bm{n}+\bm{e}_k+\bm{e}_l$, ~\eqref{eq:first recurrence} gives
\m{& \Phi_{\bm{n} + \bm{e}_k + \bm{e}_l}^* = \Phi_{\bm{n}+\bm{e}_k}^* + \beta_{\bm{n}+\bm{e}_k+\bm{e}_l}z\Phi_{\bm{n}+\bm{e}_k}, \\ &
\Phi_{\bm{n} + \bm{e}_k + \bm{e}_l}^* = \Phi_{\bm{n}+\bm{e}_l}^* + \beta_{\bm{n}+\bm{e}_k+\bm{e}_l}z\Phi_{\bm{n}+\bm{e}_k}.}
Now combine these to get ~\eqref{eq:compPoly3}. On the other hand, if $\bm{n}+\bm{e}_k$, $\bm{n}+\bm{e}_l$ and $\bm{n}$ are normal, then ~\eqref{eq:first recurrence} gives
\m{& \Phi_{\bm{n} + \bm{e}_k}^* = \Phi_{\bm{n}}^* + \beta_{\bm{n}+\bm{e}_k}z\Phi_{\bm{n}}, \\ &
\Phi_{\bm{n} + \bm{e}_l}^* = \Phi_{\bm{n}}^* + \beta_{\bm{n}+\bm{e}_l}z\Phi_{\bm{n}},}
and by combining these we get~\eqref{eq:compPoly2}. If we first multiply by $\beta_{\bm{n}+\bm{e}_l}$ in the first equation and $\beta_{\bm{n}+\bm{e}_l}$ in the second equation we would instead get ~\eqref{eq:compPoly4}.
\end{proof}




We could easily derive analogue results for type I polynomials, using similar methods. We only prove the analogue of \eqref{eq:compPoly1} and leave the analogues of~\eqref{eq:compPoly3},~\eqref{eq:compPoly2},~\eqref{eq:compPoly4} as a quick exercise to the interested reader.

\begin{prop}
If all indices appearing below are normal, and $k \neq l$, then
\nm{eq:diagonal recurrence type 1}{\bm\Lambda_{\bm{n}-\bm{e}_k} - \bm\Lambda_{\bm{n}-\bm{e}_l} = \bar\gamma_{\bm{n}-\bm{e}_k-\bm{e}_l}^{kl}\bm\Lambda_{\bm{n}}.}
\end{prop}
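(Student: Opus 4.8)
The plan is to mimic the proof of Proposition~\ref{pr:compPoly1}, but working with the type I vectors at the shifted index $\bm{n}-\bm{e}_k-\bm{e}_l$, since the type I recurrences naturally increase indices while the type II recurrences decrease them. Write $\bm{m}=\bm{n}-\bm{e}_k-\bm{e}_l$, so the claim becomes $\bm\Lambda_{\bm{m}+\bm{e}_l}-\bm\Lambda_{\bm{m}+\bm{e}_k}=\bar\gamma_{\bm{m}}^{kl}\bm\Lambda_{\bm{m}+\bm{e}_k+\bm{e}_l}$ after relabeling; more precisely the two vectors $\bm\Lambda_{\bm{n}-\bm{e}_k}$ and $\bm\Lambda_{\bm{n}-\bm{e}_l}$ should both be expressible in terms of $\bm\Lambda_{\bm{n}}$ (the common ``upper'' neighbour). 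First I would observe that $\bm\Lambda_{\bm{n}-\bm{e}_k}=(\Lambda_{\bm{n}-\bm{e}_k,1},\dots,\Lambda_{\bm{n}-\bm{e}_k,r})$ satisfies the defining orthogonality relations~\eqref{eq:mopuc eq type 1} for its own index, and I want to realize it inside the solution space attached to $\bm{n}$.

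The key structural fact to exploit is the degree/normality dichotomy furnished by Lemma~\ref{lem:normality lemma}: since $\bm{n}-\bm{e}_k-\bm{e}_l$ is normal, $\deg\Lambda_{\bm{n}-\bm{e}_k,k}=n_k-1$ and $\deg\Lambda_{\bm{n}-\bm{e}_l,l}=n_l-1$, whereas lower-index type I polynomials drop a degree in the corresponding slot. Concretely, I would show that $\bm\Lambda_{\bm{n}-\bm{e}_k}-\bm\Lambda_{\bm{n}-\bm{e}_l}$, viewed as a vector of polynomials, has componentwise degrees satisfying $\deg(\Lambda_{\bm{n}-\bm{e}_k,j}-\Lambda_{\bm{n}-\bm{e}_l,j})\le n_j-1$ for all $j$ (this is the degree bound defining a type I vector at index $\bm{n}$), and that it satisfies the orthogonality relations $\sum_{j=1}^r\jap{\Lambda_{\bm{n}-\bm{e}_k,j}-\Lambda_{\bm{n}-\bm{e}_l,j},z^p}_j=0$ for $p=0,1,\dots,\abs{\bm{n}}-2$, which is precisely~\eqref{eq:mopuc eq type 1} for the index $\bm{n}$. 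Both summands individually satisfy their own orthogonality ranges, and since $\abs{\bm{n}-\bm{e}_k}-2=\abs{\bm{n}-\bm{e}_l}-2=\abs{\bm{n}}-3$ while we need the relation up to $p=\abs{\bm{n}}-2$, I must check the boundary exponent carefully.

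By the uniqueness part of normality (Remark~\ref{rem:a-d}(c), applied at the normal index $\bm{n}$), any type I vector at $\bm{n}$ is a scalar multiple of the normalized $\bm\Lambda_{\bm{n}}$; this immediately yields the existence of a constant with $\bm\Lambda_{\bm{n}-\bm{e}_k}-\bm\Lambda_{\bm{n}-\bm{e}_l}=c\,\bm\Lambda_{\bm{n}}$. It then remains to identify $c=\bar\gamma_{\bm{n}-\bm{e}_k-\bm{e}_l}^{kl}$. The cleanest route is duality: pair~\eqref{eq:compPoly1} (written at the index $\bm{m}=\bm{n}-\bm{e}_k-\bm{e}_l$, which gives $\Phi_{\bm{m}+\bm{e}_k}-\Phi_{\bm{m}+\bm{e}_l}=\gamma_{\bm{m}}^{kl}\Phi_{\bm{m}}$) against the type I vectors using the biorthogonality of Lemma~\ref{eq:biorth}, so that the constant coming out of the type I relation is forced to be the complex conjugate of $\gamma_{\bm{m}}^{kl}$. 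I expect the main obstacle to be exactly this last identification, and in particular two bookkeeping points: getting the top orthogonality exponent $p=\abs{\bm{n}}-2$ to hold for the difference (which requires that the normalization constants of $\bm\Lambda_{\bm{n}-\bm{e}_k}$ and $\bm\Lambda_{\bm{n}-\bm{e}_l}$ are matched appropriately so the would-be obstruction terms cancel), and correctly tracking which index shifts produce the conjugate $\bar\gamma$ rather than $\gamma$ itself. Everything else is the same degree-counting and dimension argument used for Proposition~\ref{pr:compPoly1}, simply transposed to the type I side.
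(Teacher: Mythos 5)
Your first two steps are exactly the paper's argument, and they are correct: the difference $\bm\Lambda_{\bm{n}-\bm{e}_k}-\bm\Lambda_{\bm{n}-\bm{e}_l}$ has componentwise degrees at most $n_j-1$, it satisfies \eqref{eq:mopuc eq type 1} at the index $\bm{n}$, and normality of $\bm{n}$ makes the corresponding solution space one-dimensional, so the difference equals $c\,\bm\Lambda_{\bm{n}}$ for some constant $c$. The boundary exponent $p=\abs{\bm{n}}-2$ that you flag is not something that needs to be ``arranged'': both vectors carry the normalization of Remark~\ref{rem:a-d}(c), so at $p=\abs{\bm{n}}-2$ the two pairings are each equal to $1$ and cancel automatically.

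The genuine gap is the identification $c=\bar\gamma^{kl}_{\bm{n}-\bm{e}_k-\bm{e}_l}$, which you explicitly defer and which is the substantive content of the proposition. The route you sketch --- pairing \eqref{eq:compPoly1} against the type I vectors and invoking biorthogonality \eqref{eq:biorth} --- produces only tautologies if carried out as stated. For instance, applying $\sum_m\jap{\,\cdot\,,\Lambda_{\bm{n},m}}_m$ to the shifted relation $\Phi_{\bm{n}-\bm{e}_l}-\Phi_{\bm{n}-\bm{e}_k}=\gamma^{kl}_{\bm{n}-\bm{e}_k-\bm{e}_l}\Phi_{\bm{n}-\bm{e}_k-\bm{e}_l}$ gives $0=0$: the left side vanishes by \eqref{eq:biorth} ($1-1$), the right side because $\deg\Phi_{\bm{n}-\bm{e}_k-\bm{e}_l}=\abs{\bm{n}}-2$ and $\bm\Lambda_{\bm{n}}$ annihilates all polynomials of that degree; pairings against $\bm\Lambda_{\bm{n}-\bm{e}_k}$ or $\bm\Lambda_{\bm{n}-\bm{e}_l}$, or of your type I relation against the various $\Phi$'s, similarly reproduce identities already contained in one relation alone, and never couple $c$ to $\gamma$. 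What the paper actually does, and what is missing from your plan, is componentwise leading-coefficient bookkeeping: in the $k$-th slot of your relation one has $\deg\Lambda_{\bm{n}-\bm{e}_k,k}\le n_k-2$, so comparing $z^{n_k-1}$-coefficients yields $-\kappa_{\bm{n}-\bm{e}_l,k}=c\,\kappa_{\bm{n},k}$; separately, applying $\jap{\,\cdot\,,z^{n_k-1}}_k$ to the shifted \eqref{eq:compPoly1} (where $\jap{\Phi_{\bm{n}-\bm{e}_l},z^{n_k-1}}_k=0$) yields $-\jap{\Phi_{\bm{n}-\bm{e}_k},z^{n_k-1}}_k=\gamma^{kl}_{\bm{n}-\bm{e}_k-\bm{e}_l}\jap{\Phi_{\bm{n}-\bm{e}_k-\bm{e}_l},z^{n_k-1}}_k$; finally \eqref{eq:normCoeff} --- the leading-coefficient form of biorthogonality --- converts $\bar\kappa_{\bm{n},k}$ and $\bar\kappa_{\bm{n}-\bm{e}_l,k}$ into the reciprocals of exactly these two inner products, forcing $\bar c=\gamma^{kl}_{\bm{n}-\bm{e}_k-\bm{e}_l}$. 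So biorthogonality is indeed the engine, but only through \eqref{eq:normCoeff} combined with the degree drop in the $k$-th component; without that concrete mechanism your argument does not close.
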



\begin{proof}
Note that
\nm{eq:type 1 compatibility derivation}{\sum_{m = 1}^{r}\jap{\Lambda_{\bm{n}-\bm{e}_k,m} - \Lambda_{\bm{n}-\bm{e}_l,m}, z^p}_m = 0, \quad p = 0,1,\dots,\abs{\bm{n}}-2,}
and $\deg(\Lambda_{\bm{n}-\bm{e}_k,m} - \Lambda_{\bm{n}-\bm{e}_l,m}) \leq n_m - 1$, so we get 
\nm{eq:eta recurrence}{\bm\Lambda_{\bm{n}-\bm{e}_k} - \bm\Lambda_{\bm{n}-\bm{e}_l} = \eta_{\bm{n}}^{kl}\bm\Lambda_{\bm{n}}.}
for some constant $\eta_{\bm{n}}^{kl}$. If we now compare the degrees in \eqref{eq:eta recurrence} when $m = k$ we get
\m{-\kappa_{\bm{n}-\bm{e}_l,k} = \kappa_{\bm{n},k}\eta_{\bm{n}}^{kl}.}
On the other hand, if we take the $k$-th inner product with $z^{n_k}$ in \eqref{eq:compPoly1}, and shift the indices from $\bm{n}$ to $\bm{n}-\bm{e}_k-\bm{e}_l$ we get
\nm{eq:gamma inner product formula}{-\jap{\Phi_{\bm{n}-\bm{e}_k},z^{n_k-1}}_k = \jap{\Phi_{\bm{n}-\bm{e}_k-\bm{e}_l},z^{n_k-1}}_k\gamma_{\bm{n}-\bm{e}_k-\bm{e}_l}^{kl}.}
Combining these relations produces
\m{\bar\kappa_{\bm{n},k}\jap{\Phi_{\bm{n}-\bm{e}_k},z^{n_k-1}}_k\bar\eta_{\bm{n}}^{kl} = \bar\kappa_{\bm{n}-\bm{e}_l,k}\jap{\Phi_{\bm{n}-\bm{e}_k-\bm{e}_l},z^{n_k-1}}_k\gamma_{\bm{n}-\bm{e}_k-\bm{e}_l}^{kl}.}
By \eqref{eq:normCoeff} we then get $\eta_{\bm{n}}^{kl} = \bar\gamma_{\bm{n}-\bm{e}_k-\bm{e}_l}^{kl}$.
\end{proof}

In MOPRL, the nearest-neighbour recurrence coefficients satisfy a set of partial difference equations (see \cite{NNR}). The same methods that were used to prove this result can be applied to MOPUC, to get a similar set of equations. However. we choose to give a shorter proof using a different approach. 

\begin{thm}
    We have the compatibility conditions
    \begin{align} 
    \label{eq:compatibility condition 1}
    & {\beta_{\bm{n}}(\alpha_{\bm{n}-\bm{e}_l} - \alpha_{\bm{n}-\bm{e}_k}) = (\beta_{\bm{n}-\bm{e}_k} - \beta_{\bm{n}-\bm{e}_l})\alpha_{\bm{n}-\bm{e}_k-\bm{e}_l},}
    \\
    \label{eq:compatibility condition 2}
    & {\alpha_{\bm{n}}\beta_{\bm{n}} + \sum_{j = 1}^r \rho_{\bm{n},j} = 1,}
    \\
    \label{eq:compatibility condition 3.1}
    & {(\alpha_{\bm{n}-\bm{e}_l}-\alpha_{\bm{n}-\bm{e}_k})\alpha_{\bm{n}-\bm{e}_l}\rho_{\bm{n},k} = (\alpha_{\bm{n}+\bm{e}_k-\bm{e}_l}-\alpha_{\bm{n}})\alpha_{\bm{n}-\bm{e}_k-\bm{e}_l}\rho_{\bm{n}-\bm{e}_l,k},}
    \end{align}
    assuming that all the $\Z_+^r$ multi-indices that appear in the corresponding equations are normal, and for \eqref{eq:compatibility condition 2} we also need normality of all indices $\bm{n}-\bm{e}_1,\dots,\bm{n}-\bm{e}_r$ that belong to $\Z_+^r$.
\end{thm}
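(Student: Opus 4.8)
The plan is to dispatch the three identities in increasing order of difficulty, working entirely from the polynomial-level compatibility relations of Propositions~\ref{pr:compPoly1} and~\ref{pr:compPoly2}, together with the two normalizing facts $\alpha_{\bm{m}}=\Phi_{\bm{m}}(0)$ (from the Remark following Theorem~\ref{thm:type 2 szego}) and the moment formula $\jap{\Phi_{\bm{m}},z^{m_k}}_k = \rho_{\bm{m},k}\jap{\Phi_{\bm{m}-\bm{e}_k},z^{m_k-1}}_k$, i.e.~\eqref{eq:rho inner product formula}. Identity~\eqref{eq:compatibility condition 2} requires no new work: it is literally~\eqref{eq:sumRhos}, obtained by comparing leading coefficients in~\eqref{eq:second recurrence} during the proof of Corollary~\ref{cor:szTypeIIextra1}, so I would simply cite that.

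For~\eqref{eq:compatibility condition 1}, the idea is that $\Phi^*_{\bm{n}+\bm{e}_k}-\Phi^*_{\bm{n}+\bm{e}_l}$ has two expressions: by~\eqref{eq:compPoly3} it equals $\beta_{\bm{n}+\bm{e}_k+\bm{e}_l}z(\Phi_{\bm{n}+\bm{e}_l}-\Phi_{\bm{n}+\bm{e}_k})$, and by~\eqref{eq:compPoly2} it equals $(\beta_{\bm{n}+\bm{e}_k}-\beta_{\bm{n}+\bm{e}_l})z\Phi_{\bm{n}}$. Equating these, cancelling the common factor $z$, and evaluating at $z=0$ (where $\Phi_{\bm{m}}(0)=\alpha_{\bm{m}}$) gives $\beta_{\bm{n}+\bm{e}_k+\bm{e}_l}(\alpha_{\bm{n}+\bm{e}_l}-\alpha_{\bm{n}+\bm{e}_k}) = (\beta_{\bm{n}+\bm{e}_k}-\beta_{\bm{n}+\bm{e}_l})\alpha_{\bm{n}}$. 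Replacing $\bm{n}$ by $\bm{n}-\bm{e}_k-\bm{e}_l$ and negating both sides yields exactly~\eqref{eq:compatibility condition 1}.

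The genuinely substantial identity is~\eqref{eq:compatibility condition 3.1}, and this is where I expect the main obstacle, since it is the only one coupling the $\rho$'s to the $\alpha$'s. My plan is to write both $\rho_{\bm{n},k}$ and $\rho_{\bm{n}-\bm{e}_l,k}$ via~\eqref{eq:rho inner product formula} as ratios of $k$-th moments, and then relate those moments through Proposition~\ref{pr:compPoly1}. Concretely, applying~\eqref{eq:compPoly1} with base index $\bm{n}-\bm{e}_l$ and pairing in the $k$-th inner product against $z^{n_k}$ annihilates the $\Phi_{\bm{n}-\bm{e}_l+\bm{e}_k}$ term (its $k$-index is $n_k+1$), giving $\jap{\Phi_{\bm{n}},z^{n_k}}_k = -\gamma^{kl}_{\bm{n}-\bm{e}_l}\jap{\Phi_{\bm{n}-\bm{e}_l},z^{n_k}}_k$; applying~\eqref{eq:compPoly1} with base index $\bm{n}-\bm{e}_k-\bm{e}_l$ and pairing against $z^{n_k-1}$ similarly gives $\jap{\Phi_{\bm{n}-\bm{e}_k},z^{n_k-1}}_k = -\gamma^{kl}_{\bm{n}-\bm{e}_k-\bm{e}_l}\jap{\Phi_{\bm{n}-\bm{e}_k-\bm{e}_l},z^{n_k-1}}_k$. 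Substituting these into the two instances of~\eqref{eq:rho inner product formula} and cancelling the common moment $\jap{\Phi_{\bm{n}-\bm{e}_k-\bm{e}_l},z^{n_k-1}}_k$ (nonzero by Lemma~\ref{lem:normality lemma}) produces $\rho_{\bm{n},k}\,\gamma^{kl}_{\bm{n}-\bm{e}_k-\bm{e}_l} = \rho_{\bm{n}-\bm{e}_l,k}\,\gamma^{kl}_{\bm{n}-\bm{e}_l}$. Finally I would clear the $\gamma$'s using the $z=0$ evaluation of~\eqref{eq:compPoly1}, namely $\gamma^{kl}_{\bm{m}}\alpha_{\bm{m}} = \alpha_{\bm{m}+\bm{e}_k}-\alpha_{\bm{m}+\bm{e}_l}$: multiplying the last relation by $\alpha_{\bm{n}-\bm{e}_k-\bm{e}_l}\alpha_{\bm{n}-\bm{e}_l}$ converts $\gamma^{kl}_{\bm{n}-\bm{e}_k-\bm{e}_l}\alpha_{\bm{n}-\bm{e}_k-\bm{e}_l}$ into $\alpha_{\bm{n}-\bm{e}_l}-\alpha_{\bm{n}-\bm{e}_k}$ and $\gamma^{kl}_{\bm{n}-\bm{e}_l}\alpha_{\bm{n}-\bm{e}_l}$ into $\alpha_{\bm{n}+\bm{e}_k-\bm{e}_l}-\alpha_{\bm{n}}$, which is precisely~\eqref{eq:compatibility condition 3.1}. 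The delicate part will be the index bookkeeping, the verification that each ``higher'' polynomial is orthogonal to the monomial it is paired with so the intended terms vanish, and checking that the stated normality hypotheses suffice to guarantee that the cancelled moments and the coefficients $\gamma^{kl}_{\bm{m}}$ are all well defined.
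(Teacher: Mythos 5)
Your proposal is correct and follows essentially the same route as the paper: \eqref{eq:compatibility condition 2} is indeed just \eqref{eq:sumRhos}, your derivation of \eqref{eq:compatibility condition 3.1} (pairing \eqref{eq:compPoly1} against $z^{n_k}$ and $z^{n_k-1}$ to get $\rho_{\bm{n},k}\gamma^{kl}_{\bm{n}-\bm{e}_k-\bm{e}_l}=\rho_{\bm{n}-\bm{e}_l,k}\gamma^{kl}_{\bm{n}-\bm{e}_l}$, then clearing the $\gamma$'s via the $z=0$ evaluation of \eqref{eq:compPoly1}) is exactly the paper's argument through \eqref{eq:gamma inner product formula}, \eqref{eq:compatibility condition 3.2}, and \eqref{eq:gamma alpha}. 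The only cosmetic difference is in \eqref{eq:compatibility condition 1}, where you equate \eqref{eq:compPoly3} with \eqref{eq:compPoly2} and set $z=0$ instead of eliminating $\gamma^{kl}_{\bm{n}}$ between \eqref{eq:gamma alpha} and \eqref{eq:gamma beta}; the two computations are interchangeable.
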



\begin{proof}
By putting $z = 0$ in ~\eqref{eq:compPoly1} we see that 
\nm{eq:gamma alpha}{\alpha_{\bm{n}+\bm{e}_k} - \alpha_{\bm{n}+\bm{e}_l} = \alpha_{\bm{n}}\gamma_{\bm{n}}^{kl}.}
Similarly, if we combine ~\eqref{eq:compPoly1} and ~\eqref{eq:compPoly3} and compare leading coefficients we see that 
\nm{eq:gamma beta}{\beta_{\bm{n}+\bm{e}_k+\bm{e}_l} \gamma_{\bm{n}}^{kl}=\beta_{\bm{n}+\bm{e}_l}-\beta_{\bm{n}+\bm{e}_k}.} 
If we now combine \eqref{eq:gamma alpha} and \eqref{eq:gamma beta} we get \eqref{eq:compatibility condition 1}, and if we put $z = 0$ in \eqref{eq:third recurrence} we get \eqref{eq:compatibility condition 2}, so what remains is to prove \eqref{eq:compatibility condition 3.1}. From \eqref{eq:gamma inner product formula} we get the inner product formulas
\m{\gamma_{\bm{n}-\bm{e}_k-\bm{e}_l}^{kl} = -\frac{\jap{\Phi_{\bm{n}-\bm{e}_k},z^{n_k-1}}_k }{\jap{\Phi_{\bm{n}-\bm{e}_k-\bm{e}_l},z^{n_k-1}}_k},\quad \gamma_{\bm{n}-\bm{e}_l}^{kl} = -\frac{\jap{\Phi_{\bm{n}},z^{n_k}}_k }{\jap{\Phi_{\bm{n}-\bm{e}_l},z^{n_k}}_k}.}
We can combine these two relations to get
\m{\jap{\Phi_{\bm{n}},z^{n_k}}_k\jap{\Phi_{\bm{n}-\bm{e}_k-\bm{e}_l},z^{n_k-1}}_k\gamma_{\bm{n}-\bm{e}_k-\bm{e}_l}^{kl} = \jap{\Phi_{\bm{n}-\bm{e}_k},z^{n_k-1}}_k\jap{\Phi_{\bm{n}-\bm{e}_l},z^{n_k}}_k\gamma_{\bm{n}-\bm{e}_l}^{kl}.}
Dividing by $\jap{\Phi_{\bm{n}-\bm{e}_k-\bm{e}_l},z^{n_k-1}}_k\jap{\Phi_{\bm{n}-\bm{e}_k},z^{n_k-1}}_k$ and using \eqref{eq:rho inner product formula} we arrive to
\nm{eq:compatibility condition 3.2}{\rho_{\bm{n},k}\gamma_{\bm{n}-\bm{e}_k-\bm{e}_l}^{kl} = \rho_{\bm{n}-\bm{e}_l,k}\gamma_{\bm{n}-\bm{e}_l}^{kl}.}
Now multiply by $\alpha_{\bm{n}-\bm{e}_l}\alpha_{\bm{n}-\bm{e}_k - \bm{e}_l}$, and use \eqref{eq:gamma alpha} to get \eqref{eq:compatibility condition 3.1}. 
\end{proof}

\begin{rem} \textup{As we saw in the proof, \eqref{eq:compatibility condition 3.1} has the alternative version \eqref{eq:compatibility condition 3.2}.
If we multiply by $\beta_{\bm{n}}\beta_{\bm{n}+\bm{e}_k}$ and use \eqref{eq:gamma beta} we get another version, of the form
\nm{eq:compatibility condition 3}{(\beta_{\bm{n}-\bm{e}_k}-\beta_{\bm{n}-\bm{e}_l})\beta_{\bm{n}+\bm{e}_k}\rho_{\bm{n},k} = (\beta_{\bm{n}}-\beta_{\bm{n}+\bm{e}_k-\bm{e}_l})\beta_{\bm{n}}\rho_{\bm{n}-\bm{e}_l,k}.}}
\end{rem}

\section{Christoffel--Darboux Formula}\label{ss:CD}
\begin{thm}
    Let $(\bm{n}_k)_{k = 0}^{N}$ be a path of multi-indices such that $\bm{n}_0 = \bm{0}$, 
    and $\bm{n}_{k+1}-\bm{n}_k = \bm{e}_{l_k}$ for some $1\le l_k\le r$, in particular $|\bm{n}_k|=k$. Assume all multi-indices on the path are normal, along with all the neighbouring indices that belong to $\Z_+^r$. Then we have the Christoffel--Darboux formula
    \nm{eq:cd formula}{(1 - z\bar\zeta)\sum_{k = 0}^{N-1} \Phi_{\bm{n}_k}(z)\overline{\bm\Lambda_{\bm{n}_{k+1}}(\zeta)} = \Phi^*_{\bm{n}_N}(z)\overline{\bm\Lambda^*_{\bm{n}_N}(\zeta)} - \sum_{j = 1}^{r}\rho_{\bm{n}_N,j}z\Phi_{\bm{n}_N-\bm{e}_j}(z)\overline{\bm\Lambda_{\bm{n}_N+\bm{e}_j}(\zeta)} .}
\end{thm}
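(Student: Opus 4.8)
The plan is to prove \eqref{eq:cd formula} by telescoping along the path, i.e.\ by induction on $N$. Writing the right-hand side of \eqref{eq:cd formula} as $D_N$, where
\[
D_k = \Phi^*_{\bm{n}_k}(z)\overline{\bm\Lambda^*_{\bm{n}_k}(\zeta)} - \sum_{j=1}^{r}\rho_{\bm{n}_k,j}\,z\Phi_{\bm{n}_k-\bm{e}_j}(z)\overline{\bm\Lambda_{\bm{n}_k+\bm{e}_j}(\zeta)},
\]
I first note that $D_0=0$: at $\bm{n}_0=\bm{0}$ we have $\Phi^*_{\bm{0}}=1$, $\bm\Lambda^*_{\bm{0}}=\bm{0}$, and $\rho_{\bm{0},j}=0$ for all $j$. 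It therefore suffices to establish the one-step identity
\[
D_{k+1}-D_k = (1-z\bar\zeta)\,\Phi_{\bm{n}_k}(z)\overline{\bm\Lambda_{\bm{n}_{k+1}}(\zeta)},\qquad \bm{n}_{k+1}=\bm{n}_k+\bm{e}_m,
\]
where I abbreviate $m=l_k$; summing this over $k=0,\dots,N-1$ and using $D_0=0$ yields the formula. From now on I suppress the arguments $(z)$ and $(\zeta)$.

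To verify the one-step identity I would expand both sides using the four Szeg\H{o}-type recurrences. For the $\Phi^*\overline{\bm\Lambda^*}$ piece I apply \eqref{eq:first recurrence} in direction $m$, giving $\Phi^*_{\bm{n}_{k+1}}=\Phi^*_{\bm{n}_k}+\beta_{\bm{n}_{k+1}}z\Phi_{\bm{n}_k}$, together with \eqref{eq:type 1 first recurrence}, giving $\overline{\bm\Lambda^*_{\bm{n}_{k+1}}}=\overline{\bm\Lambda^*_{\bm{n}_k}}+\alpha_{\bm{n}_k}\overline{\bm\Lambda_{\bm{n}_{k+1}}}$; multiplying these out expresses $\Phi^*_{\bm{n}_{k+1}}\overline{\bm\Lambda^*_{\bm{n}_{k+1}}}-\Phi^*_{\bm{n}_k}\overline{\bm\Lambda^*_{\bm{n}_k}}$ as a sum of three explicit terms. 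On the right-hand side I expand $\bar\zeta\,\overline{\bm\Lambda_{\bm{n}_{k+1}}}$ by the complex conjugate of \eqref{eq:type 1 second recurrence} at $\bm{n}_{k+1}$, substituting once more the above expression for $\overline{\bm\Lambda^*_{\bm{n}_{k+1}}}$. The two contributions proportional to $\beta_{\bm{n}_{k+1}}z\Phi_{\bm{n}_k}\overline{\bm\Lambda^*_{\bm{n}_k}}$ and to $\alpha_{\bm{n}_k}\beta_{\bm{n}_{k+1}}z\Phi_{\bm{n}_k}\overline{\bm\Lambda_{\bm{n}_{k+1}}}$ then cancel, and after rewriting $\alpha_{\bm{n}_k}\Phi^*_{\bm{n}_k}-\Phi_{\bm{n}_k}=-\sum_{j}\rho_{\bm{n}_k,j}z\Phi_{\bm{n}_k-\bm{e}_j}$ via \eqref{eq:second recurrence}, the whole one-step identity collapses to
\[
\sum_{j=1}^{r}\rho_{\bm{n}_{k+1},j}\,z\,(\Phi_{\bm{n}_{k+1}-\bm{e}_j}-\Phi_{\bm{n}_k})\,\overline{\bm\Lambda_{\bm{n}_{k+1}+\bm{e}_j}}
=\sum_{j=1}^{r}\rho_{\bm{n}_k,j}\,z\,\Phi_{\bm{n}_k-\bm{e}_j}\,(\overline{\bm\Lambda_{\bm{n}_k+\bm{e}_j}}-\overline{\bm\Lambda_{\bm{n}_{k+1}}}).
\]

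Only the off-path directions survive in this reduced identity, since the $j=m$ terms vanish ($\Phi_{\bm{n}_{k+1}-\bm{e}_m}=\Phi_{\bm{n}_k}$ and $\bm\Lambda_{\bm{n}_k+\bm{e}_m}=\bm\Lambda_{\bm{n}_{k+1}}$). For $j\ne m$ I use Proposition~\ref{pr:compPoly1}, in the form \eqref{eq:compPoly1} with base $\bm{n}_k-\bm{e}_j$, to write $\Phi_{\bm{n}_{k+1}-\bm{e}_j}-\Phi_{\bm{n}_k}=\gamma^{mj}_{\bm{n}_k-\bm{e}_j}\Phi_{\bm{n}_k-\bm{e}_j}$, and its type I analogue \eqref{eq:diagonal recurrence type 1} to write $\overline{\bm\Lambda_{\bm{n}_k+\bm{e}_j}}-\overline{\bm\Lambda_{\bm{n}_{k+1}}}=\gamma^{mj}_{\bm{n}_k}\overline{\bm\Lambda_{\bm{n}_{k+1}+\bm{e}_j}}$. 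Both sides now carry the common factor $z\Phi_{\bm{n}_k-\bm{e}_j}\overline{\bm\Lambda_{\bm{n}_{k+1}+\bm{e}_j}}$, so matching coefficients reduces the claim to the scalar equality $\rho_{\bm{n}_{k+1},j}\gamma^{mj}_{\bm{n}_k-\bm{e}_j}=\rho_{\bm{n}_k,j}\gamma^{mj}_{\bm{n}_k}$ for each $j$. This is exactly the compatibility condition \eqref{eq:compatibility condition 3.2}, taken at $\bm{n}=\bm{n}_{k+1}$ with the superscript pair equal to $(j,m)$ and using $\gamma^{mj}=-\gamma^{jm}$, which closes the argument.

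The bookkeeping of the cancellations in the middle step is routine once the substitutions are fixed; the genuine difficulty is the final off-path reduction, where $\Phi_{\bm{n}_{k+1}-\bm{e}_j}$ and $\Phi_{\bm{n}_k}$ (and likewise $\bm\Lambda_{\bm{n}_k+\bm{e}_j}$ and $\bm\Lambda_{\bm{n}_{k+1}}$) genuinely differ and must be reconciled. It is precisely here that the compatibility conditions of the previous section, and not the recurrences alone, are indispensable. One also has to confirm that the hypothesis of the theorem (normality of the path together with all its neighbours in $\Z_+^r$) supplies normality of every diagonal index $\bm{n}_k\pm\bm{e}_j$, $\bm{n}_{k+1}-\bm{e}_j$, and $\bm{n}_k+\bm{e}_j+\bm{e}_m$ entering \eqref{eq:compPoly1}, \eqref{eq:diagonal recurrence type 1}, and \eqref{eq:compatibility condition 3.2}; each such index is a neighbour of $\bm{n}_k$ or of $\bm{n}_{k+1}$, so it is covered.
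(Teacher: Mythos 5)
Your proof is correct and follows essentially the same route as the paper: both expand the one-step (telescoping) identity using the four Szeg\H{o}-type recurrences \eqref{eq:second recurrence}, \eqref{eq:first recurrence}, \eqref{eq:type 1 second recurrence}, \eqref{eq:type 1 first recurrence}, and both dispose of the off-path cross terms via \eqref{eq:compPoly1}, \eqref{eq:diagonal recurrence type 1}, and the compatibility condition \eqref{eq:compatibility condition 3.2}. The only difference is presentational — you state the increment identity $D_{k+1}-D_k$ up front and verify it, whereas the paper derives it directly and then sums — so the two arguments are the same in substance.
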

\begin{proof}
    By \eqref{eq:type 1 second recurrence} and \eqref{eq:first recurrence} we have
    \m{z\bar\zeta\Phi_{\bm{n}_k}(z)\overline{\bm\Lambda_{\bm{n}_{k+1}}(\zeta)} = -\beta_{\bm{n}_{k+1}}z\Phi_{\bm{n}_k}(z)\overline{\bm\Lambda^*_{\bm{n}_{k+1}}(\zeta)} + \sum_{j = 1}^r \rho_{\bm{n}_{k+1},j}z\Phi_{\bm{n}_k}(z)\overline{\bm\Lambda_{\bm{n}_{k+1}+\bm{e}_j}(\zeta)} \\ = \Phi^*_{\bm{n}_k}(z)\overline{\bm\Lambda^*_{\bm{n}_{k+1}}(\zeta)} - \Phi^*_{\bm{n}_{k+1}}(z)\overline{\bm\Lambda^*_{\bm{n}_{k+1}}(\zeta)} + \sum_{j = 1}^r \rho_{\bm{n}_{k+1},j}z\Phi_{\bm{n}_k}(z)\overline{\bm\Lambda_{\bm{n}_{k+1}+\bm{e}_j}(\zeta)}.}
    By \eqref{eq:second recurrence} and \eqref{eq:type 1 first recurrence} we also have
    \m{\Phi_{\bm{n}_k}(z)\overline{\bm\Lambda_{\bm{n}_{k+1}}(\zeta)} = \alpha_{\bm{n}_k}\Phi^*_{\bm{n}_k}(z)\overline{\bm\Lambda_{\bm{n}_{k+1}}(\zeta)} + \sum_{j = 1}^{r}\rho_{\bm{n}_k,j}z\Phi_{\bm{n}_k-\bm{e}_j}(z)\overline{\bm\Lambda_{\bm{n}_{k+1}}(\zeta)} \\ = \Phi^*_{\bm{n}_k}(z)\overline{\bm\Lambda^*_{\bm{n}_{k+1}}(\zeta)} - \Phi^*_{\bm{n}_k}(z)\overline{\bm\Lambda^*_{\bm{n}_{k}}(\zeta)} + \sum_{j = 1}^{r}\rho_{\bm{n}_k,j}z\Phi_{\bm{n}_k-\bm{e}_j}(z)\overline{\bm\Lambda_{\bm{n}_{k+1}}(\zeta)}.}
    Putting these together yields
    \m{(1 - z\bar\zeta)\Phi_{\bm{n}_k}(z)\overline{\bm\Lambda_{\bm{n}_{k+1}}(\zeta)} & = \Phi^*_{\bm{n}_{k+1}}(z)\overline{\bm\Lambda^*_{\bm{n}_{k+1}}(\zeta)} - \Phi^*_{\bm{n}_k}(z)\overline{\bm\Lambda^*_{\bm{n}_{k}}(\zeta)} \\ & + \sum_{j = 1}^{r}\rho_{\bm{n}_k,j}z\Phi_{\bm{n}_k-\bm{e}_j}(z)\overline{\bm\Lambda_{\bm{n}_{k+1}}(\zeta)} \\ & - \sum_{j = 1}^r \rho_{\bm{n}_{k+1},j}z\Phi_{\bm{n}_k}(z)\overline{\bm\Lambda_{\bm{n}_{k+1}+\bm{e}_j}(\zeta)}.}
    By \eqref{eq:compPoly1} and \eqref{eq:diagonal recurrence type 1} 
    we can write
    \begin{align*}
    \sum_{j = 1}^{r}\rho_{\bm{n}_k,j} z
     \Phi_{\bm{n}_k-\bm{e}_j}&(z)
      \overline{\bm\Lambda_{\bm{n}_{k+1}}(\zeta)} - \sum_{j = 1}^r \rho_{\bm{n}_{k+1},j}z\Phi_{\bm{n}_k}(z)\overline{\bm\Lambda_{\bm{n}_{k+1}+\bm{e}_j}(\zeta)} 
    \\ 
     = \sum_{j = 1}^{r}
      \rho_{\bm{n}_k,j}
      z
      &\Phi_{\bm{n}_k-\bm{e}_j}(z)
      \brkt{\overline{\bm\Lambda_{\bm{n}_{k}+\bm{e}_j}(\zeta)} + \gamma_{\bm{n}_{k}}^{jl_k}\overline{\bm\Lambda_{\bm{n}_{k+1}+\bm{e}_j}(\zeta)}}
    \\ 
     - \sum_{j = 1}^r &\rho_{\bm{n}_{k+1},j}
     z\brkt{\Phi_{\bm{n}_{k+1}-\bm{e}_j}(z) - \gamma_{\bm{n}_{k}-\bm{e}_j}^{l_kj} \Phi_{\bm{n}_k-\bm{e}_j}(z)}\overline{\bm\Lambda_{\bm{n}_{k+1}+\bm{e}_j}(\zeta)} 
     \\
    = \sum_{j =1}^{r} \big(\rho_{\bm{n}_k,j}  & 
     \gamma_{\bm{n}_{k}}^{jl_k} - \rho_{\bm{n}_{k+1},j}\gamma_{\bm{n}_{k}-\bm{e}_j}^{jl_k}\big)
     z\Phi_{\bm{n}_k-\bm{e}_j}(z)\overline{\bm\Lambda_{\bm{n}_{k+1}+\bm{e}_j}(\zeta)} 
    \\ 
    + \sum_{j = 1}^{r} &\rho_{\bm{n}_k,j}z 
    \Phi_{\bm{n}_k-\bm{e}_j}(z)\overline{\bm\Lambda_{\bm{n}_{k}+\bm{e}_j}(\zeta)}  - \sum_{j = 1}^{r}\rho_{\bm{n}_{k+1},j}z\Phi_{\bm{n}_{k+1}-\bm{e}_j}(z)\overline{\bm\Lambda_{\bm{n}_{k+1}+\bm{e}_j}(\zeta)}.
\end{align*}
\noindent Since $\rho_{\bm{n}_k,j}\gamma_{\bm{n}_{k}}^{jl_k} - \rho_{\bm{n}_{k+1},j}\gamma_{\bm{n}_{k}-\bm{e}_j}^{jl_k}=0$
    (see \eqref{eq:compatibility condition 3.2}), we end up with
    \m{(1 - z\bar\zeta)\Phi_{\bm{n}_k}(z)\overline{\bm\Lambda_{\bm{n}_{k+1}}(\zeta)} & = \Phi^*_{\bm{n}_{k+1}}(z)\overline{\bm\Lambda^*_{\bm{n}_{k+1}}(\zeta)} - \Phi^*_{\bm{n}_k}(z)\overline{\bm\Lambda^*_{\bm{n}_{k}}(\zeta)} \\ & + \sum_{j = 1}^{r}\rho_{\bm{n}_k,j}z\Phi_{\bm{n}_k-\bm{e}_j}(z)\overline{\bm\Lambda_{\bm{n}_{k}+\bm{e}_j}(\zeta)} \\ & - \sum_{j = 1}^{r}\rho_{\bm{n}_{k+1},j}z\Phi_{\bm{n}_{k+1}-\bm{e}_j}(z)\overline{\bm\Lambda_{\bm{n}_{k+1}+\bm{e}_j}(\zeta)}.}
    Now summation over $k$ leads to a telescoping sum resulting in exactly \eqref{eq:cd formula}. 
\end{proof}

\bibsection

\begin{biblist}[\small]


\bib{AFFM}{article}{
    author = {\'{A}lvarez-Fern\'{a}ndez, C.},
    author = {Fidalgo Prieto, U.},
    author = {Ma\~{n}as, M.},
     TITLE = {Multiple orthogonal polynomials of mixed type: {G}auss-{B}orel
              factorization and the multi-component 2{D} {T}oda hierarchy},
   JOURNAL = {Adv. Math.},
  FJOURNAL = {Advances in Mathematics},
    VOLUME = {227},
      YEAR = {2011},
    NUMBER = {4},
     PAGES = {1451--1525},
}

\bib{Aptekarev}{article}{
   author={Aptekarev, A.I.},
   title={Multiple orthogonal polynomials},
   journal={J. Comput. Appl. Math.},
   volume={99},
   year={1998},
   pages={423--447},
}

\bib{Jacobi operator on trees}{article}{
   author={Aptekarev, A.I.},
   author={Denisov, S.A.},
   author={Yattselev, M.L.},
   title={Self-adjoint Jacobi matrices on trees and multiple orthogonal polynomials},
   journal={Trans. Amer. Math. Soc.},
   volume={373},
   number={2},
   year={2020},
   pages={875--917},
}

\bib{ADY2}{article}{
    AUTHOR = {Aptekarev, A.I.},
    AUTHOR = {Denisov, S.A.},
    AUTHOR = {Yattselev, M.L.},
     TITLE = {Jacobi matrices on trees generated by {A}ngelesco systems:
              asymptotics of coefficients and essential spectrum},
   JOURNAL = {J. Spectr. Theory},
  FJOURNAL = {Journal of Spectral Theory},
    VOLUME = {11},
      YEAR = {2021},
    NUMBER = {4},
     PAGES = {1511--1597},
}

\bib{AK}{article}{
    AUTHOR = {Aptekarev, A.I.},
    AUTHOR = {Kozhan, R.},
     TITLE = {Differential equations for the recurrence coefficients limits
              for multiple orthogonal polynomials from a {N}evai class},
   JOURNAL = {J. Approx. Theory},
  FJOURNAL = {Journal of Approximation Theory},
    VOLUME = {255},
      YEAR = {2020},
     PAGES = {105409, 21},
}


\bib{Bor}{article}{
    AUTHOR = {Borodin, A.},
     TITLE = {Biorthogonal ensembles},
   JOURNAL = {Nuclear Phys. B},
  FJOURNAL = {Nuclear Physics. B. Theoretical, Phenomenological, and
              Experimental High Energy Physics. Quantum Field Theory and
              Statistical Systems},
    VOLUME = {536},
      YEAR = {1999},
    NUMBER = {3},
     PAGES = {704--732},
}

\bib{mopuc recurrence}{article}{
   author={Cruz-Barroso, R.},
   author={Díaz Mendoza, C.},
   author={Orive, R.},
   title={Multiple orthogonal polynomials on the unit circle. Normality and recurrence relations},
   journal={J. Comput. Appl. Math.},
   volume={284},
   year={2015},
   pages={115--132},
}

\bib{DaeKui}{article}{
    AUTHOR = {Daems, E.},
    AUTHOR = {Kuijlaars, A.B.J.},
     TITLE = {A {C}hristoffel-{D}arboux formula for multiple orthogonal
              polynomials},
   JOURNAL = {J. Approx. Theory},
  FJOURNAL = {Journal of Approximation Theory},
    VOLUME = {130},
      YEAR = {2004},
    NUMBER = {2},
     PAGES = {190--202},
      ISSN = {0021-9045,1096-0430},
}

\bib{DaeKui07}{article}{
    AUTHOR = {Daems, E.},
    AUTHOR = {Kuijlaars, A.B.J.},
     TITLE = {Multiple orthogonal polynomials of mixed type and
              non-intersecting {B}rownian motions},
   JOURNAL = {J. Approx. Theory},
  FJOURNAL = {Journal of Approximation Theory},
    VOLUME = {146},
      YEAR = {2007},
    NUMBER = {1},
     PAGES = {91--114},
      ISSN = {0021-9045,1096-0430},
}

\bib{DY}{article}{
    AUTHOR = {Denisov, S.A.},
    AUTHOR = {Yattselev, M.L.},
     TITLE = {Spectral theory of {J}acobi matrices on trees whose
              coefficients are generated by multiple orthogonality},
   JOURNAL = {Adv. Math.},
  FJOURNAL = {Advances in Mathematics},
    VOLUME = {396},
      YEAR = {2022},
     PAGES = {Paper No. 108114, 79},
}


\bib{Dui18}{article}{
    AUTHOR = {Duits, M.},
     TITLE = {On global fluctuations for non-colliding processes},
   JOURNAL = {Ann. Probab.},
  FJOURNAL = {The Annals of Probability},
    VOLUME = {46},
      YEAR = {2018},
    NUMBER = {3},
     PAGES = {1279--1350},
}

\bib{DFK}{article}{
    AUTHOR = {Duits, M.},
    AUTHOR = {Fahs, B.},
    AUTHOR = {Kozhan, R.},
     TITLE = {Global fluctuations for multiple orthogonal polynomial
              ensembles},
   JOURNAL = {J. Funct. Anal.},
  FJOURNAL = {Journal of Functional Analysis},
    VOLUME = {281},
      YEAR = {2021},
    NUMBER = {5},
     PAGES = {Paper No. 109062, 49},
}

\bib{Ismail}{book}{
   author={Ismail, M.E.H.},
   title={Classical and Quantum Orthogonal
Polynomials in One Variable},
   isbn={9780521782012},
   series={Encyclopedia of Mathematics and its Applications},
   Volume={98},
   publisher={Cambridge University Press},
   year={2005},
}

\bib{Kui}{incollection}{
   AUTHOR = {Kuijlaars, A.B.J.},
     TITLE = {Multiple orthogonal polynomial ensembles},
 BOOKTITLE = {Recent trends in orthogonal polynomials and approximation
              theory},
    SERIES = {Contemp. Math.},
    VOLUME = {507},
     PAGES = {155--176},
 PUBLISHER = {Amer. Math. Soc., Providence, RI},
      YEAR = {2010},
      ISBN = {978-0-8218-4803-6},
}

\bib{KuiMar}{article}{
    AUTHOR = {Kuijlaars, A.B.J.}
    AUTHOR=  {Mart\'{\i}nez-Finkelshtein, A.}
    AUTHOR = {Wielonsky, F.},
     TITLE = {Non-intersecting squared {B}essel paths and multiple orthogonal polynomials for modified {B}essel weights},
   JOURNAL = {Comm. Math. Phys.},
  FJOURNAL = {Communications in Mathematical Physics},
    VOLUME = {286},
      YEAR = {2009},
    NUMBER = {1},
     PAGES = {217--275}
}

\bib{Applications}{article}{
   author={Martínez-Finkelshtein, A.}
   author={Van Assche, W.},
   title={WHAT IS...A Multiple Orthogonal Polynomial?},
   journal={Not. Am. Math. Soc.},
   volume={63},
   year={2016},
   pages={1029--1031},
}

\bib{MFS}{article}{
    AUTHOR = {Mart\'{\i}nez-Finkelshtein, A.},
    AUTHOR = {Silva, G.L.F.},
     TITLE = {Spectral curves, variational problems and the {H}ermitian
              matrix model with external source},
   JOURNAL = {Comm. Math. Phys.},
  FJOURNAL = {Communications in Mathematical Physics},
    VOLUME = {383},
      YEAR = {2021},
    NUMBER = {3},
     PAGES = {2163--2242},
}

\bib{mopuc}{article}{
   author={Mínguez Ceniceros, J.},
   author={Van Assche, W.},
   title={Multiple orthogonal polynomials on the unit circle},
   journal={Constr. Approx.},
   volume={28},
   year={2008},
   pages={173--197},
}

\bib{NdavA}{article}{
    AUTHOR = {Ndayiragije, F.},
    AUTHOR = {Van Assche, W.},
     TITLE = {Asymptotics for the ratio and the zeros of multiple {C}harlier
              polynomials},
   JOURNAL = {J. Approx. Theory},
  FJOURNAL = {Journal of Approximation Theory},
    VOLUME = {164},
      YEAR = {2012},
    NUMBER = {6},
     PAGES = {823--840},
}

\bib{NeuvA}{article}{
    AUTHOR = {Neuschel, T.},
    AUTHOR = {Van Assche, W.},
     TITLE = {Asymptotic zero distribution of {J}acobi-{P}i\~{n}eiro and
              multiple {L}aguerre polynomials},
   JOURNAL = {J. Approx. Theory},
  FJOURNAL = {Journal of Approximation Theory},
    VOLUME = {205},
      YEAR = {2016},
     PAGES = {114--132},
}

\bib{bookNS}{book}{
    AUTHOR = {Nikishin, E.M.},
    AUTHOR = {Sorokin, V.N.},
     TITLE = {Rational approximations and orthogonality},
    SERIES = {Translations of Mathematical Monographs},
    VOLUME = {92},
      NOTE = {Translated from the Russian by Ralph P. Boas},
 PUBLISHER = {American Mathematical Society, Providence, RI},
      YEAR = {1991},
     PAGES = {viii+221},
      ISBN = {0-8218-4545-4},
}

\bib{Simon}{book}{
   author={Simon, B.},
   title={Orthogonal Polynomials on the Unit Circle, Part 1: Classical Theory},
   isbn={0-8218-3446-0},
   series={Colloquium Lectures},
   Volume={54}
   publisher={American Mathematical Society},
   year={2004},
}

\bib{SwivA}{article}{
    author = {\'{S}widerski, G.},
    author = {Van Assche, W.},
     TITLE = {Christoffel functions for multiple orthogonal polynomials},
   JOURNAL = {J. Approx. Theory},
  FJOURNAL = {Journal of Approximation Theory},
    VOLUME = {283},
      YEAR = {2022},
     PAGES = {Paper No. 105820, 22},
}

\bib{NNR}{article}{
   author={Van Assche, W.},
   title={Nearest neighbor recurrence relations for multiple
orthogonal polynomials},
   journal={J. Approx. Theory},
   volume={163},
   year={2011},
   pages={1427--1448},
}


\bib{vA16}{incollection}{
    AUTHOR = {Van Assche, W.},
     TITLE = {Ratio asymptotics for multiple orthogonal polynomials},
 BOOKTITLE = {Modern trends in constructive function theory},
    SERIES = {Contemp. Math.},
    VOLUME = {661},
     PAGES = {73--85},
 PUBLISHER = {Amer. Math. Soc., Providence, RI},
      YEAR = {2016},
      ISBN = {978-1-4704-2534-0},
}

\end{biblist}

\end{document}